\documentclass[11pt]{amsart}
\usepackage{amsmath,amssymb,latexsym,soul,cite,mathrsfs}

\usepackage{color,enumitem,graphicx}
\usepackage[colorlinks=true,urlcolor=blue,
citecolor=red,linkcolor=blue,linktocpage,pdfpagelabels,
bookmarksnumbered,bookmarksopen]{hyperref}
\usepackage[english]{babel}
\usepackage{comment}

\usepackage[left=2.9cm,right=2.9cm,top=2.8cm,bottom=2.8cm]{geometry}
\usepackage[hyperpageref]{backref}

\usepackage[colorinlistoftodos]{todonotes}
\makeatletter
\providecommand\@dotsep{5}
\def\listtodoname{List of Todos}
\def\listoftodos{\@starttoc{tdo}\listtodoname}
\makeatother

\usepackage{verbatim} % to using the command "\comment"

\numberwithin{equation}{section}
%\pagestyle{myheadings}
% \markboth{}{} \pretolerance=10000
%\def\lb{\lambda}
%\def\var{\varepsilon}
%\def\pil{\left<}
%\def\pir{\right>}

%\def\nd{\noindent}
%\def\thend{\rule{3mm}{3mm}}
%\def\mathbb Re{\mathbb{R}}

\newtheorem{lem}{Lemma}
\newtheorem{prop}[lem]{Proposition}
\newtheorem{theo}{Theorem}
\newtheorem{coro}[lem]{Corollary}
\newtheorem{rem}[lem]{Remark}
%------------------------------------------------------
%------------------------------------------------------
% solucion para tener el simbolo de funcion restricta a un conjunto
% usar como \restr{f}{A}
\newcommand\restr[2]{{% we make the whole thing an ordinary symbol
  \left.\kern-\nulldelimiterspace % automatically resize the bar with \right
  #1 % the function
  \vphantom{\big|} % pretend it's a little taller at normal size
  \right|_{#2} % this is the delimiter
  }}
%------------------------------------------------------
%------------------------------------------------------
%\newcommand{\fim}{\hfill\rule{2mm}{2mm}}
%\newcommand{\n}{{\noindent}}
%\newcommand{p}{\displaystyle}
%\newcommand{\mathbb R}{\mbox{${\rm{I\!R}}$}}
%\newcommand{\mathbb RN}{I\!\!R^N}
%\newcommand{\N}{\mbox{${\rm{I\!N}}$}}
%\newcommand{\mathbb R}{\mathrm{I\!R\!}}
%\newcommand{\mathcal N}{\mathrm{I\!N\!}}
%\newcommand{\C}{\mbox{${\rm{C\hspace{-1.8mm}\rule{0.3mm}{2.8mm}}}$}}
%\def\n{\hspace*{0em}}
%\newcommand{}{\displaystyle}
%\def\dis{\displaystyle}
%\def\theequation{\thesection.\arabic{equation}}
%\let\Section=\section

%\def\section{\setcounter{equation}{0}\Section}
\newcommand\numberthis{\addtocounter{equation}{1}\tag{\theequation}}
%------------------------------------------------------
%------------------------------------------------------
\providecommand{\abs}[1]{\lvert#1\rvert}
\providecommand{\norm}[1]{\lVert#1\rVert}
%------------------------------------------------------
%------------------------------------------------------

\DeclareMathOperator{\supp}{supp}

\title[Least energy sign-changing solution for an asymptotically cubic SP system]
{Least energy radial sign-changing solution \\for the  Schr\"odinger-Poisson system in $\mathbb R^{3}$\\
under an asymptotically cubic nonlinearity
}

%\maketitle{}

\author[E. G. Murcia]{ Edwin Gonzalo Murcia}
\author[G. Siciliano]{Gaetano Siciliano}

\address[E. G. Murcia]{\newline\indent
	Departamento de Matem\'aticas
	\newline\indent 
	Pontificia Universidad Javeriana
	\newline\indent
	Carrera 7 No. 43-82, Bogot\'a, Colombia}
\email{\href{mailto:murciae@javeriana.edu.co}{murciae@javeriana.edu.co}}

\address[G. Siciliano]{\newline\indent
	Departamento de Matem\'atica - Instituto de Matem\'atica e Estat\'istica
	\newline\indent 
	Universidade de S\~ao Paulo
	\newline\indent
	Rua do Mat\~ao 1010,  05508-090  S\~ao Paulo, Brazil}
\email{\href{mailto:sicilian@ime.usp.br}{sicilian@ime.usp.br}}

\thanks{Edwin Murcia  was supported by Department of Mathematics, Pontificia Universidad Javeriana.
Gaetano Siciliano was partially supported by Fapesp, CNPq and Capes, Brazil. 
}
\subjclass[2010]{
35J50,   %Variational methods for elliptic systems
35Q60,  %PDEs in connection with optics and electromagnetic theory
58E30.   %Variational principles
}
%\date{\today}
\keywords{Schr\"odinger-Poisson system, variational methods, standing waves solutions,
nodal Nehari set}

\pretolerance10000

%------------------------------------------------------
%------------------------------------------------------
\begin{document}

\begin{abstract}
In this paper we consider the following Schr\"odinger-Poisson system in the whole $\mathbb R^{3}$, 
\begin{equation*}
     \left\{
        \begin{array}{ll}
            -\Delta u+u+ \lambda \phi u=f(u) &\text{ in } \mathbb R^3, \\
            -\Delta \phi= u^2  &\text{ in } \mathbb R^3,
        \end{array}
    \right.                
\end{equation*}
where $\lambda>0$ and
 the nonlinearity $f$ is ``asymptotically cubic'' at infinity.
This implies that the nonlocal term $\phi u$ and the nonlinear term $f(u)$ are, in some sense, in a strict competition.
We show that the system admits a least energy sign-changing and radial solution obtained by minimizing the 
energy functional on the so-called {\sl nodal Nehari set}.

\end{abstract}

\maketitle

\section{Introduction}

A great attention has been given in the last decades to the so called Schr\"odinger-Poisson system, namely
\begin{equation}\label{ourproblem}
    \left\{
        \begin{array}{ll}
            -\Delta u+u+ \lambda \phi u=f(u) &\text{ in } \mathbb R^3, \smallskip \\
            -\Delta \phi= u^2  &\text{ in } \mathbb R^3,
        \end{array}
    \right.        
\end{equation}
due especially to its importance in many physical applications but also since it presents difficulties and
challenges from a mathematical point of view. 

It is known that the system can be reduced to the equation
$$-\Delta u+u+ \lambda \phi_{u} u=f(u) \text{ in } \ \ \mathbb R^3,$$
and that its solutions can be found as critical points in $H^{1}(\mathbb R^{3})$
of the energy functional
\begin{equation}\label{eq:I}
I(u) := \frac{1}{2}\int_{\mathbb R^{3}}\left(|\nabla u|^{2}+u^{2}\right)dx+\frac\lambda4 \int_{\mathbb R^{3}} \phi_{u}u^{2}dx
-\int_{\mathbb R^{3}}F(u) dx,
\end{equation}
where 
$$F(t) := \int_{0}^{t}f(\tau)d\tau, \quad \phi_{u} = \frac{1}{4\pi|\cdot|} *u^{2}.$$
Before anything else, we observe that  $\phi_{u}$ is automatically positive and univocally defined by $u$; hence
words like ``solution'', ``positive'', ``sign-changing''
 always refer to the unknown $u$ of the system.
% in virtue of the {\sl reduction method }developed by Benci and Fortunato \cite{BF}. % (see also \cite{DM}).

Observe that
%evidently,  there is a competition between the two last two  terms in the functional
%and this causes some difficulty in studying its properties.
%Even more, 
since $\phi_{u} u $ is $3-$homogeneous, in the sense that
$$ \phi _{t u} (tu) = t^{3} \phi_{u} u, \quad t\in \mathbb R,$$
there is a further difficulty in the problem exactly
when the nonlinearity $f$ behaves ``cubically'' at infinity,
we say it is {\sl asymptotically cubic}, being in this case in competition with the nonlocal term
$\phi_{u} u$.

The number of  papers which have studied the Schr\"odinger-Poisson system
in the mathematical
literature is so huge that it is almost impossible to give a satisfactory list. Indeed many papers deal with the problem in
bounded domain or  in the whole space (see e.g. \cite{BF,BS,DM,PS,RS,S} 
and the references therein) and some other papers deal with the fractional counterpart 
(see e.g \cite{MS,dS} and its references). In all the cited papers
various type of solutions have been found under different assumptions on the nonlinearity.
However the solutions found  are positive or with undefined sign
and the nonlinearity $f$ is ``supercubic'' at infinity
(in a sense that will be specified below) an this fact helps in many computations 
since it gains on the nonlocal term $\phi_{u}u$.

 Nevertheless some  results have been obtained
also in the asymptotically cubic case: for example, in the remarkable paper \cite{ADP} 
the authors consider the existence of solutions  under a very general nonlinearity
$f$ of Berestycki-Lions type. However they found a {\sl positive} solution,
for small values of the parameter $\lambda>0$ 
(and as we will see the smallness of $\lambda$ is necessary).

\medskip

However beside the existence of positive solutions it is also interesting to find sign-changing 
solutions and indeed many
 authors began recently to address this issue.
We cite  the interesting paper \cite{WZ} which deals with the case  $f(u)=|u|^{p-1}u$ and $p\in (3,5)$ and where
the authors search for least energy sign-changing solutions, that is the sign-changing solution whose functional 
has minimal energy among all the others sign-changing solutions.
 Their idea is to study the energy functional on a new
constraint, a subset of the Nehari manifold which contains all the sign-changing solutions. 
%One of the  key points is that this constraint is not empty for every $p\in (3,5)$. 
%Of course such a nonlinearity is ``supercubic'' at infinity.

Another interesting paper is \cite{ASS} which deals with a more general nonlinearity, not necessarily of power type, 
where the authors assume that
\begin{itemize}
\item $\displaystyle\lim_{t\to\infty} \frac{F(t)}{t^{4}} =+\infty$.
\end{itemize}
In this sense  \cite{ASS} and \cite{WZ} deal with a {\sl supercubic} nonlinearity $f$.

The above condition is also required in  \cite{AS}, for the case of the bounded domain,
and in \cite{LXZ}, for the case of the whole space,
where a least energy sign-changing solution is obtained.
 
In all these papers concerning sign-changing solutions, one of the main task is to prove that the new constraint on which minimize the functional is not empty. To show this, the fact that the nonlinearity is supercubic
is strongly used.

\medskip

Motivated by the previous discussion, a natural question which arises concerns 
the case when the nonlinearity is ``cubic'' at infinity.
More specifically in this paper we address the problem \eqref{ourproblem}
under the following conditions. Let $\lambda>0$
and assume \smallskip
\begin{enumerate}[label=(f\arabic*),ref=f\arabic*,start=1]
\item\label{f_{1}} $f \in C (\mathbb R ,\mathbb R)$; \medskip
\item\label{f_{2}} $f(t)=-f(-t)$ for $t\in \mathbb R$; \medskip 
\item\label{f_{3}} $\lim_{t \rightarrow 0} {f(t)/t}=0$; \medskip
\item\label{f_{4}} %there is $s>0$ such that $\lim_{t\rightarrow \infty}{f(t)}/{t^3}=1/s$ and $f(t)/t^3<1/s$ for all $t\in \mathbb R$; \medskip
    $\lim_{t\rightarrow \infty}{f(t)}/{t^3}=1$ and $f(t)/t^3<1$ for all $t\in \mathbb R$; \medskip
\item\label{f_{5}}  the function $ t\mapsto f(t)/t^3$ is strictly increasing on $(0,\infty)$; \medskip
\item\label{f_{6}} recalling that $F(t)=\int_0^t f(\tau)d \tau$,
    \[
        \lim_{t\rightarrow \infty} [f(t)t-4F(t)]=+\infty.
    \]
\end{enumerate}
Assumption \eqref{f_{4}} is what we called {\sl asymptotically cubic} behaviour for the nonlinearity and 
\eqref{f_{6}} is the analogous of the usual {\sl non-quadraticity condition}.

Our result is the following
\begin{theo}\label{theorem}
    If $\lambda>0$ is sufficiently small, under the conditions \eqref{f_{1}}-\eqref{f_{6}}, problem \eqref{ourproblem} has a radial sign-changing ground state solution.
    Moreover it  changes sign exactly once in $\mathbb R^3$.
\end{theo}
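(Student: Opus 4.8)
Work in the radial Sobolev space $H:=H^1_r(\mathbb R^3)$: on it $I$ is $C^1$, $O(3)$-invariant, and compactly embedded in $L^p(\mathbb R^3)$ for $p\in(2,6)$, so by the principle of symmetric criticality it suffices to produce a sign-changing critical point of $I|_H$ (radiality is then automatic). Writing $u=u^++u^-$ for the positive/negative parts and using $u^+u^-=0$ together with the symmetry of the Riesz kernel, one has the splitting
\[
I(u)=I(u^+)+I(u^-)+\frac{\lambda}{2}\int_{\mathbb R^3}\phi_{u^+}(u^-)^2\,dx,\qquad
\langle I'(u),u^\pm\rangle=\langle I'(u^\pm),u^\pm\rangle+\lambda\int_{\mathbb R^3}\phi_{u^\mp}(u^\pm)^2\,dx .
\]
The plan is to minimize $I$ over the \emph{nodal Nehari set}
\[
\mathcal M:=\bigl\{u\in H:\ u^+\neq0,\ u^-\neq0,\ \langle I'(u),u^+\rangle=\langle I'(u),u^-\rangle=0\bigr\},
\]
and to let $c:=\inf_{\mathcal M}I$ be the candidate ground state level; since every sign-changing solution lies in $\mathcal M$, a free critical point lying in $\mathcal M$ with energy $c$ is automatically a least energy sign-changing solution.

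First I would study, for fixed $u$ with $u^\pm\neq0$, the fibre map $h_u(s,t):=I(su^++tu^-)$ on $[0,\infty)^2$. By \eqref{f_{1}}--\eqref{f_{4}} (in particular $F(\tau)/\tau^4\to 1/4$ and $F(\tau)<\tau^4/4$) its leading behaviour at infinity is that of the quartic form $\tfrac14\bigl(a_+s^4+a_-t^4+2b\,s^2t^2\bigr)$, with $a_\pm=\lambda\int\phi_{u^\pm}(u^\pm)^2-\int(u^\pm)^4$ and $b=\lambda\int\phi_{u^+}(u^-)^2$; since $a_\pm\to-\int(u^\pm)^4<0$ and $b^2\to0<a_+a_-$ as $\lambda\to0^+$, this form is negative definite for $\lambda$ small, so $h_u(s,t)\to-\infty$ as $|(s,t)|\to\infty$ and $h_u$ attains a maximum, which by \eqref{f_{3}} is interior, and the maximizer lies in $\mathcal M$. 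Applying this to one fixed $u_0\in H$ with $u_0^\pm\neq0$ gives $\mathcal M\neq\emptyset$ once $\lambda<\lambda_0(u_0)$; this is the only place the smallness of $\lambda$ is truly needed, and it encodes exactly the strict competition between $\phi_uu$ and $f$ built into \eqref{f_{4}}. Using \eqref{f_{5}} the interior critical point of $h_u$ is unique, so the projection $u\mapsto(s_u,t_u)$ is well defined and continuous wherever it exists, with $I(s_uu^++t_uu^-)=\max h_u$. Also \eqref{f_{5}} gives $f(\tau)\tau-4F(\tau)\ge0$, so for $u\in\mathcal M$, $I(u)=I(u)-\tfrac14\langle I'(u),u\rangle=\tfrac14\|u\|^2+\tfrac14\int(f(u)u-4F(u))\ge\tfrac14\|u\|^2$; and $\langle I'(u),u^\pm\rangle=0$ with $f(\tau)\tau<\tau^4$ and Sobolev's inequality force $\|u^\pm\|\ge c_0>0$ with $c_0$ independent of $\lambda$. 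Hence $c\ge\tfrac12c_0^2>0$ and minimizing sequences are bounded in $H$.

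Then I would carry out the minimization. Take $(u_n)\subset\mathcal M$ with $I(u_n)\to c$; up to a subsequence $u_n\rightharpoonup u$ in $H$ and $u_n^\pm\to u^\pm$ in $L^p$, $p\in(2,6)$, so the nonlocal and nonlinear terms are continuous along $(u_n)$ and $I$ is weakly lower semicontinuous. From $\langle I'(u_n),u_n^\pm\rangle=0$, the growth of $f$ and $\|u_n^\pm\|\ge c_0$ one gets $u^\pm\neq0$, and passing to the limit in the same identities yields $\int(u^\pm)^4-\lambda\int\phi_{u^\pm}(u^\pm)^2\ge\|u^\pm\|^2+\lambda\int\phi_{u^+}(u^-)^2>0$, which is precisely the condition making $u$ admissible; put $w:=s_uu^++t_uu^-\in\mathcal M$. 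Then $c\le I(w)$, while $I(w)\le\liminf_nI(s_uu_n^++t_uu_n^-)\le\liminf_nI(u_n)=c$ (the middle step because $u_n\in\mathcal M$ makes $(1,1)$ the maximizer of $(s,t)\mapsto I(su_n^++tu_n^-)$), so $I(w)=c$. To promote $w$ to a free critical point I would use the usual constrained deformation argument: if $I'(w)\neq0$, on a small square $Q=[1-\rho,1+\rho]^2$ the map $(s,t)\mapsto I(sw^++tw^-)$ has a strict interior maximum $c$ at $(1,1)$; a pseudo-gradient flow of $I$ localized near $w$ lowers this maximum strictly below $c$ without moving $\partial Q$, and since $(s,t)\mapsto(\langle I'(sw^++tw^-),sw^+\rangle,\langle I'(sw^++tw^-),tw^-\rangle)$ has Brouwer degree $1$ on $Q$, the deformed family still meets $\mathcal M$, yielding an $\mathcal M$-element of energy $<c$, a contradiction. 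Thus $I'(w)=0$; by symmetric criticality $w$ solves \eqref{ourproblem}, it is radial and, as $w^\pm\neq0$, sign-changing, hence a least energy sign-changing solution.

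It remains to show $w$ changes sign exactly once. If $w$ had at least three nodal domains I would write $w=u+v$ with $uv=0$, $u$ sign-changing ($u^\pm\neq0$) and $v\neq0$ a union of the remaining nodal pieces. Since $w$ is a solution, $\langle I'(w),u^\pm\rangle=\langle I'(w),v\rangle=0$, so $u$ is admissible and $\hat u:=s_uu^++t_uu^-\in\mathcal M$. Combining the splitting of $I$ over pairwise disjoint supports, the maximality of $\hat u$ and of $w$ for the relevant fibre maps, the additivity over disjoint supports of $J(\cdot):=I(\cdot)-\tfrac14\langle I'(\cdot),\cdot\rangle\ge\tfrac14\|\cdot\|^2$ and the monotonicity of $\sigma\mapsto J(\sigma u^\pm)$ (both from \eqref{f_{5}}), the strict positivity of all nonlocal cross terms, the bound $\|v\|\ge c_0$, and once more the smallness of $\lambda$ (to dominate the cross terms $\tfrac{\lambda}{4}\int\phi_wv^2$ by $\tfrac14\|v\|^2$), one obtains $I(\hat u)<c$, contradicting $\hat u\in\mathcal M$; hence $w$ has exactly two nodal domains. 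I expect the main obstacle to be the existence of an interior maximum for $h_u$, equivalently the nonemptiness of $\mathcal M$: in the supercubic regime this holds for every $u$, whereas under the asymptotically cubic condition \eqref{f_{4}} it forces the quartic form to be negative definite and hence $\lambda$ small. The second, more technical, difficulty is controlling the projection scalars $s_u,t_u$ in the last step so that the discarded nodal piece provably carries positive energy, where \eqref{f_{6}} and the monotonicity \eqref{f_{5}} enter decisively.
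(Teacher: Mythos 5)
Your overall architecture coincides with the paper's: minimize $I$ on the nodal Nehari set $\mathcal M$, use a deformation--plus--Miranda argument to show the minimizer is a free critical point, and rule out a third nodal domain by projecting the two-bump part back onto $\mathcal M$. Where you genuinely depart from the paper is the nonemptiness of $\mathcal M$, which is the paper's main technical point (Proposition \ref{prop:M}, proved in the Appendix): there an explicit element of $\mathcal M$ is built by cutting off and rescaling the positive radial solution of \cite{ADP} on an annulus where $\mathfrak u^2-\lambda\phi_{\mathfrak u}>0$, followed by Miranda's theorem on a square $[t_0,T_0]^2$. Your alternative --- maximizing the fibre map $h_{u_0}(s,t)=I(su_0^++tu_0^-)$ for one fixed $u_0$ and noting that the limiting quartic form $\tfrac14(a_+s^4+2b\,s^2t^2+a_-t^4)$ is negative definite for small $\lambda$ since $a_\pm\to-\int(u_0^\pm)^4<0$ and $b\to0$ --- is simpler and appears sound, provided you justify $\liminf_{s\to\infty}s^{-4}\int F(su_0^\pm)\,dx\ge\tfrac14\int(u_0^\pm)^4\,dx$ with care ($F$ need not be nonnegative, so Fatou requires the monotonicity \eqref{f_{5}} or the bounds \eqref{III.4} to produce an integrable minorant, exactly as in the Appendix lemmas). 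Likewise, your boundedness of minimizing sequences via $I\ge\tfrac14\|\cdot\|^2$ on $\mathcal N$ is a legitimate shortcut past Proposition \ref{boundedsequence}, and your direct projection of the weak limit plus weak lower semicontinuity replaces the paper's strong-convergence-by-contradiction step; for that projection to exist you need, and your displayed bound $-a_\pm\ge\|u^\pm\|^2+b$ does give, the strict inequality $a_+a_->b^2$ (negativity of $a_\pm$ alone is not enough). The claimed uniqueness and continuity of $(s_u,t_u)$ is neither proved nor needed.

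The one genuine gap is in the nodal-domain count, where you invoke ``once more the smallness of $\lambda$'' to dominate the cross term $\tfrac{\lambda}{4}\int\phi_w v^2$ by $\tfrac14\|v\|^2$. This cannot be done: $\lambda$ is fixed before the minimizer $w$ exists, so no further smallness may depend on $w$ or its nodal decomposition; as written the step is circular. Fortunately no smallness is needed, because the cross terms enter with a favorable sign. Writing $w=u_1+u_2+u_3$ with pairwise disjoint supports and $I'(w)[u_i]=0$, one has the identity of Remark \ref{domination2},
\[
I(t_1u_1+t_2u_2+t_3u_3)=\sum_{i=1}^3\Big(I(t_iu_i)-\tfrac{t_i^4}{4}I'(u_i)[u_i]\Big)-\tfrac{\lambda}{4}\sum_{i<j}(t_i^2-t_j^2)^2\int_{\mathbb R^3}\phi_{u_i}u_j^2\,dx,
\]
in which each bracket is, by \eqref{f_{5}}, strictly maximized at $t_i=1$ and the last sum is nonpositive; taking $t_3=0$ gives $I(t_1u_1+t_2u_2)<I(w)=c$ for all $t_1,t_2\ge0$, with no restriction on the projection scalars and no extra condition on $\lambda$. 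Replacing your domination step by this identity closes the argument.
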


A function $f$ satisfying our assumptions is 
$$f(t) = \frac{t^{5}}{1+t^{2}}, \quad t\in \mathbb R,$$
 which has as primitive $$ F(t) = \frac{t^{4}}{4} - \frac{t^{2}}{2} + \frac{1}{2}\ln(1+t^{2}).$$
Clearly this function does not satisfy the assumption
$$\lim_{t\to \infty}\frac{F(t)}{t^{4}}=+\infty$$
required in \cite{ASS}.
Moreover, the case $f(t) = |t|^{p-1}t, p\in (3,5)$ studied in \cite{WZ} does not satisfies \eqref{f_{4}}.
So the present paper gives a new contribution in studying sign-changing solutions for the  Schr\"odinger-Poisson problem in the asymptotically cubic nonlinearity and can be seen as a counterpart of the papers
 \cite{ADP,ASS,WZ}.

\medskip

As we said before, we   use variational methods: the solution will be found
as the minimum of $I$,  in the context of radial functions,
 on the constraint already introduced in \cite{WZ}.
Nevertheless,  the main difficulty is to show that the constraint on which minimize
the functional is nonempty under our assumptions on $f$;
indeed all the techniques of the above cited
papers concerning the supercubic case (see also \cite{MMS} for the single equation)
 do not work and some new ideas have been necessary.

\medskip

The organisation of the paper is the following. 

In Section \ref{sec:preliminaries}
we recall and give some preliminary facts. 

In Section \ref{sec:useful}  the
set on which minimize the functional is introduced, and some of its properties proved.
Indeed this Section collects all the ingredients we need in order to prove the result.
In particular it is stated Proposition \ref{prop:M} which says that the
set on which we minimize is nonempty.

In Section \ref{sec:proof} the main result is proved. 

Finally in the Appendix we prove the  technical Proposition \ref{prop:M}.

\medskip

\subsection*{Notations}
We conclude this Introduction by introducing few basic notations. In all the paper, $H^{1}(\mathbb R^{3})$
is the usual Sobolev space with norm
$$\|u\| =\left( |\nabla u|_{2}^{2} +|u|_{2}^{2} \right)^{1/2},$$
where $|\cdot|_{p}$ is the usual $L^{p}-$norm in $\mathbb R^{3}$.
Moreover $H^{1}_{rad}(\mathbb R^{3})$ denotes the subspace of radial functions.
We need also the space $D^{1,2}(\mathbb R^{3})$, which is defined as the completion of the test functions
with respect to the norm $\|\cdot\|_{D}: =|\nabla \cdot|_{2}$. 
As usual, $D^{1,2}_{rad}(\mathbb R^{3})$ is the subspace of radial functions.
We denote with
$\norm{\, \cdot\, }_\star$  the norm for the space of continuous linear functionals defined on $H^{1}_{rad}(\mathbb R^{3})$. 
As customary, $meas(\cdot)$ takes the Lebesgue measure of a set. Furthermore, 
we use the letter $C$ to denote a positive constant 
whose value may change from line to line.
%\color{black}
Other notations will be introduced as soon as we need.

\section{Preliminaries and known results}\label{sec:preliminaries}
We begin by recalling that in particular from \eqref{f_{1}}, \eqref{f_{3}} and \eqref{f_{4}}, given $\varepsilon >0$ and $q\in (4, 6)$, there exists $C_\varepsilon >0$  constant such that
\begin{equation}\label{III.4}
    \abs{f(t)}\leq \varepsilon \abs{t}+ C_\varepsilon \abs{t}^{q-1}\quad \text{and} \quad \abs{F(t)}\leq \frac{\varepsilon}{2}t^2+ \frac{C_\varepsilon}{q} \abs{t}^q, \quad \forall t\in \mathbb R.
\end{equation}
Moreover from \eqref{f_{5}} it follows that
\begin{equation}\label{eq:strictincreasing}
\text{ the function  }t\in [0,+\infty)\mapsto f(t)t-4F(t)\in \mathbb R \  \text{ is strictly increasing}
\end{equation} 
(see e.g. \cite[Lemma 2.3]{Liu}) and then by \eqref{f_{2}} and \eqref{eq:strictincreasing} we infer 
\begin{equation}\label{eq:positivity}
f(t)t-4F(t)\geq0, \quad \forall t \in \mathbb R.
\end{equation}

Let us recall also a result on the whole $\mathbb R^{3}$. This will have a major role in all our analysis. In the paper \cite{ADP} it was studied  the  problem
\begin{equation}\label{ADPproblem2}
    \left\{
        \begin{array}{ll}
            -\Delta u+\lambda \phi u=g(u) &\text{ in } \mathbb R^3 \\
            -\Delta \phi= \lambda u^2  &\text{ in } \mathbb R^3,
        \end{array}
    \right.
\end{equation}
under the following assumptions on $g$:
\begin{enumerate}[label=(g\arabic*),ref=g\arabic*,start=1]
\item%\label{g_{1}} 
$g \in C (\mathbb R ,\mathbb R)$, \medskip
\item%\label{g_{2}} 
$-\infty < \liminf_{t\rightarrow 0^+}g(t)/t\leq \limsup_{t\rightarrow 0^+}g(t)/t=-m<0$,\medskip
\item%\label{g_{3}} 
$-\infty \leq \limsup_{t \rightarrow \infty} {g(t)/t^5}\leq 0$, \medskip
\item%\label{g_{4}} 
there exists $\zeta>0$ such that $G(\zeta):=\int_0^{\zeta}g(t)dt>0$. \medskip
\end{enumerate}
In other words $g$ is a general nonlinearity satisfying the Berestycki-Lions assumptions. The authors prove that there is a $\lambda_{0}>0$ such that for every $\lambda\in(0,\lambda_{0})$ the problem \eqref{ADPproblem2} has a nontrivial positive and radial solution.

Our problem \eqref{ourproblem} can be written as \eqref{ADPproblem2} just renaming the nonlinearity and the potential $\phi$; then in virtue of \cite{ADP} we have the following
\begin{lem}\label{le:facile}
Under conditions \eqref{f_{1}}, \eqref{f_{3}} and \eqref{f_{4}} there is a $\lambda_{0}>0$ such that for every $\lambda\in(0,\lambda_{0})$ problem \eqref{ourproblem} has a positive and radial solution $\mathfrak u$.
\end{lem}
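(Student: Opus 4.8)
The plan is to recognise \eqref{ourproblem} as a particular case of the system \eqref{ADPproblem2} and then to invoke the existence result of \cite{ADP} recalled above. Indeed, after eliminating $\phi$, the system \eqref{ourproblem} is equivalent to the single equation $-\Delta u+\lambda \phi_u u=g(u)$ with $g(t):=f(t)-t$, while \eqref{ADPproblem2} with parameter $\mu>0$ is equivalent to $-\Delta u+\mu^2\phi_u u=g(u)$ (here $-\Delta\phi_u=u^2$ in both cases). Choosing $\mu=\sqrt{\lambda}$ the two equations coincide, and since the passage from one system to the other only rescales $\phi$ and leaves $u$ — hence its sign and its radial symmetry — untouched, any positive radial solution of \eqref{ADPproblem2} with parameter $\sqrt{\lambda}$ is a positive radial solution of \eqref{ourproblem} with parameter $\lambda$. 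So, once we check that $g$ fulfils the Berestycki--Lions hypotheses (g1)--(g4), \cite{ADP} furnishes $\mu_0>0$ such that \eqref{ADPproblem2} — and therefore \eqref{ourproblem} — is solvable for every $\mu\in(0,\mu_0)$, i.e. for every $\lambda\in(0,\lambda_0)$ with $\lambda_0:=\mu_0^2$.

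It remains to verify (g1)--(g4) for $g(t)=f(t)-t$, using only \eqref{f_{1}}, \eqref{f_{3}}, \eqref{f_{4}}. Condition (g1) is immediate from \eqref{f_{1}}. For (g2) we use \eqref{f_{3}}: $g(t)/t=f(t)/t-1\to -1$ as $t\to 0$, so both the liminf and the limsup at $0^+$ equal $-1$, which is finite and strictly negative (so $m=1$). For (g3) we use \eqref{f_{4}}: $g(t)/t^5=t^{-2}\bigl(f(t)/t^3\bigr)-t^{-4}\to 0$ as $t\to\infty$, whence $\limsup_{t\to\infty}g(t)/t^5=0\le 0$. Finally, for (g4) note that \eqref{f_{4}} forces $F(t)/t^4\to 1/4$ (apply l'H\^opital's rule, or use that $f(t)\ge t^3/2$ for large $t$), so that $G(\zeta)=F(\zeta)-\zeta^2/2$ grows like $\zeta^4/4$ at infinity; in particular $G(\zeta)\to+\infty$, and $G(\zeta)>0$ for every sufficiently large $\zeta$.

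There is no real obstacle in this lemma: it is essentially a bookkeeping reduction to the result of \cite{ADP}, and the only place where a (very short) computation enters is (g4), where one has to extract the quartic behaviour of the primitive $F$ from the asymptotically cubic condition \eqref{f_{4}}; everything else is a direct evaluation of limits, and one must simply keep track of the rescaling relating the parameter of \eqref{ourproblem} to that of \eqref{ADPproblem2}. If desired, one may further note that the solution $\mathfrak u$ produced by \cite{ADP} decays exponentially — because $g(t)/t\to -1<0$ near the origin — so that in fact $\mathfrak u\in H^1_{rad}(\mathbb R^3)$.
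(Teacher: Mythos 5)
Your proposal is correct and follows exactly the route the paper takes: the paper's ``proof'' is just the one-line remark that \eqref{ourproblem} can be rewritten in the form \eqref{ADPproblem2} (renaming the nonlinearity as $g(t)=f(t)-t$ and rescaling the potential), after which \cite{ADP} is invoked. Your explicit verification of (g1)--(g4) and of the parameter correspondence $\lambda=\mu^{2}$ simply fills in details the authors left implicit, and all of it checks out.
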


Recall also that  
if $\phi_{u}$ is the unique solution of the Poisson equation 
$$-\Delta \phi = u^{2} \quad \text{ in }\mathbb R^{3}$$
for a given $u\in H_{rad}^{1}(\mathbb R^{3})$, 
then 
\begin{equation}\label{eq:stimafacile}
\exists \, C>0 : 
%\textcolor{red}{ \norm{\phi_u}_{D}\leq C_0 \norm{u}^2,}\ \ 
\|\phi_{u}\|_{D}^{2} = \int_{\mathbb R^3} \phi_u u^2 dx\leq C\norm{u}^4.
\end{equation}
For this see e.g. \cite{R}.

The smallness of $\lambda$ stated in Lemma \ref{le:facile}
 is a necessary condition in order to have a positive
 solution for problem \eqref{ourproblem}. In our case, that is in presence of a nonlinearity which is not of power type, this can be seen in the following way. Let $u_\lambda \in H^1(\mathbb R^3)$ be a  positive  solution of \eqref{ourproblem} 
for a certain value of $\lambda>0$ and let  $\phi_{\lambda} := \phi_{u_{\lambda}}\in D^{1,2}(\mathbb R^{3})$. Multiplying the first equation of \eqref{ourproblem} by $u_\lambda$ and  integrating on $\mathbb R^3$ we obtain
\begin{equation}\label{eq:disug}
0<\norm{u_\lambda}^2 =\int_{\mathbb R^3} \left( \frac{f(u_\lambda)}{u_\lambda^3}-\lambda \frac{\phi_\lambda}{u_\lambda^2}\right) u_\lambda^4\, dx < \int_{\mathbb R^3} \left( 1-\lambda \frac{\phi_\lambda}{u_\lambda^2}\right)u_\lambda^4\, dx = \int_{\mathbb R^3} \left( u_\lambda^4-\lambda \phi_\lambda u_\lambda^2\right) dx 
\end{equation}
so that
\begin{equation}\label{III.1}
    \lambda  < \frac{\displaystyle{\int _{\mathbb R^3}u_\lambda^4dx}}{\displaystyle{\int _{\mathbb R^3} \phi_\lambda u_\lambda^2dx}}.
\end{equation}
We define the set made by scalar multiple of solutions, precisely
\[
    \mathcal S_{\lambda}:=\left\{ {\norm{\phi_\lambda}}_{D}^{-1/2}u_{\lambda}\in H^1(\mathbb R^3) \setminus\{0\}:  u_{\lambda} \text{ solves } \eqref{ourproblem} \right\}\neq\emptyset.
\]
Now if $v\in \mathcal S_{\lambda}$ then $v={\norm{\phi_\lambda}}_{D}^{-1/2}u_{\lambda}$ for some solution $u_{\lambda}$ and $\|\phi_{v}\|_{D}=1$. Then by \eqref{eq:stimafacile},
%Using that there are constants $C_0$, $C_1$ such that
%$$\norm{\phi_u}_{D}\leq C_0 \norm{u}^2,\ \ \int_{\mathbb R^3} \phi_u u^2 dx\leq C_1\norm{u}^4.$$
%we have
\[
    1=\norm{\phi_v}_{D}^{2}=\int _{\mathbb R^3} \abs{\nabla \phi_v}^2 dx =\int _{\mathbb R^3} \phi_v v^2 dx\leq C\norm{v}^4
\]
showing that $K :=\inf _{v\in \mathcal S_{\lambda}} \norm{v}^4>0.$ Moreover recalling \eqref{III.1}, 
\[
    \lambda  < \frac{{\norm{\phi_\lambda}}_{D}^{-2}\displaystyle{\int _{\mathbb R^3}u_{\lambda}^4dx}}{{\norm{\phi_\lambda}}_{D}^{-2}\displaystyle{\int _{\mathbb R^3} 
    \phi_\lambda u_{\lambda}^2dx}}=\frac{\displaystyle{\int _{\mathbb R^3}v^4dx}}{\displaystyle{\int _{\mathbb R^3} \phi_{v}v^2dx}}= \int _{\mathbb R^3}v^4dx\leq C \norm{v}^4
\]
%where $C_{opt}>0$ is the optimal constant of the embedding $H^1(\mathbb R^3) \hookrightarrow L^4(\mathbb %R^3)$}. 
and then ${\lambda }/{C}\leq K$ i.e. $\lambda \leq K C.$

\medskip

Recall also that if $u$ is radial, the unique solution $\phi_{u}$ of the Poisson equation 
is also radial, and by the Newton's Theorem it can be written as (we omit from now on the factor $1/4\pi$)
$$\phi_{u}(r)=\frac{1}{r}\int_0^\infty u^2(s)s\min\{s,r\}ds.$$
From this representation it is easy to see that $\phi_{u}$ is decreasing in the radial coordinate.
%We show that $$0<r_1<r_2\qquad\Rightarrow\qquad\phi(r_1)>\phi(r_2).$$
Indeed let $0<r_{1}<r_{2}$; we have that
\begin{align*}
    \phi(r_1)&=\frac{1}{r_1}\int_0^\infty u^2(s)s\min\{s,r_1\}ds\\
    &=\frac{1}{r_1}\int_0^{r_1}u^2(s)s^2ds+\int_{r_1}^\infty u^2(s)sds\\
    &=\frac{1}{r_1}\int_0^{r_1}u^2(s)s^2ds+\int_{r_1}^{r_2}u^2(s)sds+\int_{r_2}^\infty u^2(s)sds\\
    &>\frac{1}{r_2}\int_0^{r_1}u^2(s)s^2ds+\frac{1}{r_2}\int_{r_1}^{r_2}u^2(s)s^2ds+\int_{r_2}^\infty u^2(s)sds\\
    &=\frac{1}{r_2}\int_0^{r_2}u^2(s)s^2ds+\int_{r_2}^\infty u^2(s)sds\\
    &=\frac{1}{r_2}\int_0^\infty u^2(s)s\min\{s,r_2\}ds\\
    &=\phi(r_2).
\end{align*}
%and since
%$$0<r_1<r_2\qquad\Rightarrow\qquad\frac{1}{r_1}>\frac{1}{r_2}$$
%and
%$$r_1<s<r_2\qquad\Rightarrow\qquad 1>\frac{s}{r_2}$$
%then
%$$\frac{1}{r_1}\int_0^{r_1}u^2(s)s^2ds>\frac{1}{r_2}\int_0^{r_1}u^2(s)s^2ds\ \text{ and}\ \int_{r_1}^{r_2}u^2(s)sds>\frac{1}{r_2}\int_{r_1}^{r_2}u^2(s)s^2ds,$$
%from which
%\begin{align*}
%    \phi(r_1)&>\frac{1}{r_2}\int_0^{r_1}u^2(s)s^2ds+\frac{1}{r_2}\int_{r_1}^{r_2}u^2(s)s^2ds+\int_{r_2}^\infty u^2(s)sds\\
%    &=\frac{1}{r_2}\int_0^{r_2}u^2(s)s^2ds+\int_{r_2}^\infty u^2(s)sds\\
%    &=\frac{1}{r_2}\int_0^\infty u^2(s)s\min\{s,r_2\}ds\\
%    &=\phi(r_2).
%\end{align*}
The radial  solutions of \eqref{ourproblem}
are critical points of the $C^1$ functional $I:H^1_{rad}(\mathbb R^3) \rightarrow \mathbb R$ defined 
in \eqref{eq:I};
%\[
%    I(u):=\frac{1}{2}\int_{\mathbb R^3} \left( \abs{\nabla u}^2+u^2\right) dx +\frac{\lambda}{4}\int_{\mathbb R^3} \phi_u %u^2dx- \int_{\mathbb R^3} F(u)dx.
%\]
%Here $\phi_u$ denotes, for $u\in H^1(\mathbb R^3)$ fixed, the unique solution to the second equation in \eqref{ourproblem}.  
indeed the derivative at $u$ has the expression
\[
    I'(u)[v]=\int_{\mathbb R^3} \left( \nabla u  \nabla v +uv\right) dx +\lambda\int_{\mathbb R^3} \phi_u uv dx- \int_{\mathbb R^3} f(u)vdx, \quad \text{for } v\in H^1_{rad}(\mathbb R^3)
\]
and clearly all the nontrivial critical points of $I$ belong to the so called   Nehari set
$$\mathcal N:=\left\{u\in H^1_{rad}(\mathbb R^3)\setminus \{0\}:I'(u)[u]=0\right\}.$$
Let us define the map
$$\gamma(u) := I'(u)[u]= \int_{\mathbb R^{3}}\left( |\nabla u|^{2} +u^{2}\right)dx+\lambda\int_{\mathbb R^{3}} \phi_{u} u^2dx-\int_{\mathbb R^{3}}f(u)udx,$$
evidently continuous.
Two basic properties of $\mathcal N$ are the following. Others will be shown in the next section.

First, $\mathcal N$ is bounded away from zero in $H^{1}_{rad}(\mathbb R^{3})$. Indeed from $u\not\equiv0$ and $\gamma(u)=0$ by \eqref{III.4} with $\varepsilon=1/2$ 
and $q\in(4,6)$ we have
    \begin{align*}
        \int_{\mathbb R^3} \left( \abs{\nabla u}^2 +u^2\right) dx &< \int_{\mathbb R^3} \left( \abs{\nabla u}^2 +u^2\right) dx+ \lambda\int_{\mathbb R^3} \phi_uu^2 dx\\
        &= \int_{\mathbb R^3} f(u)u dx\\
        & \leq \frac{1}{2} \int_{\mathbb R^3} u^2 dx +C_{1/2} \int_{\mathbb R^3} \abs{u}^q dx
    \end{align*}
    so that
    \begin{equation}\label{eq:Lq}
        \frac{1}{2}\|u\|^{2}< \int_{\mathbb R^3} \left(\abs{\nabla u}^2 +\frac{1}{2}u^2\right) dx < C_{1/2} \int_{\mathbb R^3} \abs{u}
        ^q dx<C \|u\|^{q}.
    \end{equation}
    
   % By the Sobolev embeddings, for some constant $C>0$,
   % $$C\abs{v}_q^2\leq \frac{1}{2}\norm{v}^2<M_\varepsilon \abs{v}_q^q,$$
  %  and since $v\not\equiv0$, the claim holds.
    
    \begin{rem}\label{rem:less}
    Of course the above computation also holds  if $\gamma(u)<0$. Moreover we can 
    deduce from \eqref{eq:Lq} and the Sobolev embeddings,   that there exists $L>0$ such that
    %\color{red}
    %$$ \forall u\in \mathcal N : \ 0<L< |u|_{q} , \|u \| .$$
    %\color{blue}
    $$ u\in \mathcal N,\quad\text{or,}\quad u\in H^1_{rad}(\mathbb R^3)\text{ with }\gamma(u)<0\quad\Longrightarrow\quad 0<L< |u|_{q} , \|u \| .$$
    %\color{black}
   % $|v|_{q}\geq C'>0$ for some     constant $C'>0$.
    \end{rem}

Second, the functional $I$ is bounded from below on $\mathcal N$; indeed, for $u\in \mathcal N$,
\begin{align*}
        I(u)&=I(u)-\frac{1}{4}I'(u)[u]\\
        &=\frac{1}{4}\norm{u}^2-\int _{\mathbb R^3} F(u)dx +\frac{1}{4}\int _{\mathbb R^3}f(u)u dx\\
        & > \int_{\mathbb R^3} \left[ \frac{1}{4}f(u)u-F(u) \right] dx\\
        &\geq0. \numberthis \label{eq:Ibb}
    \end{align*}
in virtue of \eqref{eq:positivity}.
Through the paper we will use repeatedly inequalities like
$$I(u)>\int_{\mathbb R^3} \left[ \frac{1}{4}f(u)u-F(u) \right] dx, \ \ \text{ whenever } \  u\in \mathcal N.$$

% -----------------------------------
%%  NO NECESSIDA ESCREBIR ESO
%\textcolor{red}{  
%Let $\{u_n\}\subset H^1_{rad}(\mathbb R^3)$ be a sequence such that $u_n\rightarrow u$ in $H^1_{rad}(\mathbb %R^3)$. Then
 %   $$\int_{\mathbb R^3}(\abs{\nabla u_n}^2+u_n^2)dx\rightarrow\int_{\mathbb R^3}(\abs{\nabla u}^2+u^2)dx,$$
 %   because the integral is the inner product defined on $H^1_{rad}(\mathbb R^3)$. Since in particular we have 
 %   $u_n\rightharpoonup u$ in $H^1_{rad}(\mathbb R^3)$, 
 %   and by Remark \ref{convergences},
 %   $$\int_{\mathbb R^3}f(u_n)u_ndx\rightarrow\int_{\mathbb R^3}f(u)udx,\ \int_{\mathbb R^3}%F(u_n)dx\rightarrow\int_{\mathbb R^3}F(u)dx\ \text{ and }\int_{\mathbb R^3}\phi_{u_n}%u_n^2dx\rightarrow\int_{\mathbb R^3}\phi_uu^2dx.$$
  %  and then, joining all these convergences, we have that the function
%Consequently the map
%    $$u\in H^1_{rad}(\mathbb R^3)\mapsto\gamma(u):=I'(u)[u]\in\mathbb R
    %\quad\gamma(u_n)\rightarrow\gamma(u).
 %   $$
 %    is continuous.
%}
%------------------------------

\begin{rem}\label{convergences} Recall also the following.
    Given a sequence $\{u_n\}\subset H^1_{rad}(\mathbb R^3)$ with $u_n\rightharpoonup u$ in $H^1_{rad}(\mathbb R^3)$, since $H^1_{rad}(\mathbb R^3)\hookrightarrow\hookrightarrow L^p(\mathbb R^3)$ for $2<p<6$, we have 
    $$\int_{\mathbb R^3}f(u_n)u_ndx\rightarrow\int_{\mathbb R^3}f(u)udx\quad\text{and}\quad\int_{\mathbb R^3}F(u_n)dx\rightarrow\int_{\mathbb R^3}F(u)dx,$$
    and  \emph{(see \cite[Lemma 2.1]{R})}
    $$\phi_{u_n}\rightarrow\phi_u\quad\text{in }D_{rad}^{1,2}(\mathbb R^3).$$
    Even more, if $\{v_n\}$ is another sequence in $H^1_{rad}(\mathbb R^3)$ with $v_n\rightharpoonup v$ in $H^1_{rad}(\mathbb R^3)$, then
    $$\int_{\mathbb R^3}\phi_{u_n}v_n^2dx\rightarrow\int_{\mathbb R^3}\phi_uv^2dx,$$
    because $\phi_{u_n}\rightarrow\phi_u$ in $L^6(\mathbb R^3)$ and $v_n^2\rightarrow v^2$ in $L^{6/5}(\mathbb R^3)$.
\end{rem}

\medskip

\section{Some useful results}\label{sec:useful}
In this Section, by following \cite{WZ} we introduce the set
on which minimize the functional $I$ and give all the properties we need to work with.

Let us denote hereafter, $u^+(x):=\max\{u(x),0\}$ and $u^-(x):=\min\{u(x),0\}$. In this way we have the 
decomposition $u=u^{+}+u^{-}$.

Now, although $\phi_{u} = \phi_{u^{+}}+\phi_{u^{-}}$ (see \cite{WZ})
it results
\begin{eqnarray*}
I(u)= I(u^{+}) + I(u^{-}) +\frac{\lambda}{2}\int_{\mathbb R^{3}}\phi_{u^{-}} (u^{+})^{2} dx%+\frac{\lambda}{4}\int_{\mathbb R^{3}}\phi_{u^{+}} (u^{-})^{2} dx
\end{eqnarray*}
then an extra term, with respect to the case $\lambda=0$, appears in the decomposition of $I$. 
We  use here, and throughout the paper, that
$$\forall u,v\in H^{1}(\mathbb R^{3}) : \int_{\mathbb R^{3}} \phi_{u} v^{2}dx = \int_{\mathbb R^{3}} \phi_{v} u^{2}dx.$$
Moreover, if $u^{\pm}\not\equiv0$,
\begin{eqnarray*}
I'(u)[u^{+}] &=& I'(u^{+})[u^{+}]+\lambda\int_{\mathbb R^{3}}\phi_{u^{-}}(u^{+})^{2}> I'(u^{+})[u^{+}],\\
I'(u)[u^{-}] &=& I'(u^{-})[u^{-}]+\lambda\int_{\mathbb R^{3}}\phi_{u^{+}}(u^{-})^{2}> I'(u^{-})[u^{-}],
\end{eqnarray*}
and this implies that, if $u$ is a sign changing solution, 
%\color{red}
hence in particular $u\in\mathcal M$,
%\color{black}
then $ I'(u^{+})[u^{+}],  I'(u^{-})[u^{-}]<0$ and so
$$u^{\pm}\not\in \mathcal N.$$
For this reason $\mathcal N$ is not a good set on which find the critical points
of $I$; a natural choice of the set on which study the functional $I$ is
$$\mathcal M:=\left\{u\in\mathcal N:I'(u)[u^+]=0;\ u^{\pm}\not\equiv0\right\}.$$
Observe that if $u\in \mathcal M$ then $I'(u)[u^-]=0$ and that  any sign-changing solution is on $\mathcal M$.
Moreover $\mathcal M$ is not a smooth manifold.

By the above decomposition we have, for $\alpha,\beta>0$ and $u\in H^{1}(\mathbb R^{3})$:
\begin{equation}\label{eq:decomposI'}
I'(\alpha u^{+} + \beta u^{-})[\alpha u^{+}] = I'(\alpha u^{+})[\alpha u^{+}] + \alpha^{2} \beta^{2}\lambda\int_{\mathbb R^{3}} \phi_{ u^{+}} ( u^{-})^{2}dx
\end{equation}
that will be useful later on.

The main results of this Section  are the following: 
\begin{itemize}
\item $\mathcal M$ is not empty (see Proposition \ref{prop:M}), \smallskip
\item if $\inf_{\mathcal M}I$ is achieved, it is a critical level  (see Proposition \ref{attained}).
\end{itemize}

Note that the second item is not immediate since $\mathcal M$
is not a smooth manifold, and indeed a preliminary work is necessary.

%------------------------------------------------------
\begin{prop}\label{prop:M}
For $\lambda$ sufficiently small, the set $\mathcal M$ is not empty.
\end{prop}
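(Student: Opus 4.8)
The plan is to start from the positive radial solution $\mathfrak u$ provided by Lemma \ref{le:facile} and manufacture from it a sign-changing test function on which a suitable rescaling lands in $\mathcal M$. Concretely, I would take two disjoint ``bumps'' — for instance translate $\mathfrak u$ (or truncate it) so as to obtain $w^+$ supported near one point and $w^-=-\,(\text{a translate})$ supported near another, with disjoint supports, and set $w=w^++w^-$. Since the supports are disjoint, $\int \phi_{w^+}(w^-)^2\,dx>0$ but one keeps control of all the other terms. Then consider the two-parameter function $(\alpha,\beta)\mapsto \big(I'(\alpha w^++\beta w^-)[\alpha w^+],\,I'(\alpha w^++\beta w^-)[\beta w^-]\big)$ for $\alpha,\beta>0$, and look for a zero: a zero means exactly that $\alpha w^++\beta w^-\in\mathcal M$. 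This is the same strategy as in \cite{WZ}, the novelty being that the asymptotically cubic growth \eqref{f_{4}} makes the existence of such a zero delicate.

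The key steps, in order, would be: (i) record the explicit form of the two scalar equations using the decomposition \eqref{eq:decomposI'}, namely
\begin{align*}
\alpha^2\|w^+\|^2+\lambda\alpha^4\!\int \phi_{w^+}(w^+)^2+\lambda\alpha^2\beta^2\!\int \phi_{w^+}(w^-)^2&=\int f(\alpha w^+)\alpha w^+,\\
\beta^2\|w^-\|^2+\lambda\beta^4\!\int \phi_{w^-}(w^-)^2+\lambda\alpha^2\beta^2\!\int \phi_{w^+}(w^-)^2&=\int f(\beta w^-)\beta w^-;
\end{align*}
(ii) using \eqref{f_{3}}, \eqref{III.4} show that for small $(\alpha,\beta)$ the left-hand sides dominate (so the ``$\gamma$-type'' functions are positive there); (iii) using \eqref{f_{4}}--\eqref{f_{5}}, i.e. $f(t)/t^3\uparrow 1$, show that along suitable large values of $(\alpha,\beta)$ the right-hand sides overtake the left-hand sides — here is exactly where the smallness of $\lambda$ enters, because one needs $\lambda\big(\int\phi_{w^+}(w^+)^2+\int\phi_{w^+}(w^-)^2\big)<\int (w^+)^4$ and likewise for the minus part, mirroring the necessary-condition computation \eqref{eq:disug}--\eqref{III.1} already displayed in the paper; (iv) invoke a topological degree / Miranda-type (Poincaré–Bohl) argument on a large rectangle in the $(\alpha,\beta)$-quadrant to produce a zero $(\bar\alpha,\bar\beta)$, using the sign information from (ii) and (iii) on the four sides; (v) conclude $\bar\alpha w^++\bar\beta w^-\in\mathcal M$, so $\mathcal M\neq\emptyset$.

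The main obstacle — and the step I expect to require the ``new ideas'' the authors allude to — is step (iii)/(iv): in the supercubic case $\int f(t)t$ grows faster than $t^4$, so the right-hand sides beat the cubic nonlocal terms automatically for large $\alpha,\beta$ regardless of $\lambda$; here $f(t)t\sim t^4$ only, and the strict inequality $f(t)/t^3<1$ gives a gain that is not uniform, so one must quantify it carefully and trade it against $\lambda\int\phi\,(\cdot)^2$. Choosing $w^\pm$ well (so that $\int (w^\pm)^4$ is large relative to $\int\phi_{w^+}(w^+)^2+\int\phi_{w^+}(w^-)^2$, e.g. by spreading out or by taking $\lambda$ small after fixing $w$) is what makes the rectangle argument close, and controlling the behaviour uniformly in the two parameters simultaneously (rather than one at a time as in the single-equation Nehari reduction) is the delicate point; this is presumably why the full argument is deferred to the Appendix.
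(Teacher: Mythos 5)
Your overall architecture is the same as the paper's (build a sign-changing test function with disjointly supported parts from the positive solution $\mathfrak u$, and apply Miranda's theorem to $(\alpha,\beta)\mapsto\bigl(I'(\alpha w^++\beta w^-)[\alpha w^+],\,I'(\alpha w^++\beta w^-)[\beta w^-]\bigr)$ on a rectangle, with positivity on the two ``small'' sides and negativity on the two ``large'' sides), and you correctly isolate the crux: for the large sides one needs, in the limit $T\to\infty$ of $\gamma_\pm(T\cdot)/T^4$, an inequality of the form $\lambda\bigl(\int\phi_{w^+}(w^+)^2+\int\phi_{w^+}(w^-)^2\bigr)<\int(w^+)^4$ and its analogue for $w^-$. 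But this is precisely the step you do not prove, and the two fixes you sketch do not work as stated. ``Spreading out'' goes the wrong way: under a dilation $w(\cdot/R)$ one has $\int(w^+)^4\sim R^3$ while $\int\phi_{w^+}(w^+)^2\sim R^5$, so spreading out makes the required inequality harder, not easier. ``Taking $\lambda$ small after fixing $w$'' is circular in your setup, because your $w^\pm$ are built from $\mathfrak u=\mathfrak u_\lambda$, which changes with $\lambda$; it would only be legitimate if you abandoned $\mathfrak u$ altogether and took a $\lambda$-independent test function, accepting a smallness threshold on $\lambda$ that depends on that function (this variant does prove the Proposition as literally stated, but it is a weaker and different statement than the paper's, which works for every $\lambda$ in the fixed range $(0,\lambda_0)$ where the positive solution exists).

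The missing idea, which is the actual content of the paper's Appendix, is how to obtain the key inequality for the \emph{given} $\lambda$. The paper derives from the Nehari identity for $\mathfrak u$ and the strict bound $f(t)/t^3<1$ that $\int(\mathfrak u^2-\lambda\phi_{\mathfrak u})\mathfrak u^2\,dx>0$ (see \eqref{eq:possible}), hence there is an annulus on which $\mathfrak u^2-\lambda\phi_{\mathfrak u}>0$ pointwise; cutting $\mathfrak u$ off to that annulus as in \eqref{condition} produces a bump $\mathfrak v$ for which $\int(\mathfrak v^2-\lambda\phi_{\mathfrak v})\mathfrak v^2\,dx>0$, which is exactly what makes $\limsup_{t\to\infty}G(e_t)/t^4<0$ in Lemma \ref{lem:T0} (the limit coefficient $1$ in \eqref{f_{4}} is matched against $\mathfrak v^4$). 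The second, equally essential, device is the choice of the negative bump: it is not an arbitrary translate but the strong inward rescaling $w(x)=u(T_0x)$, chosen so that (using the radial monotonicity of $\phi_u$ and the condition $\lambda\phi_u(T_0r_1)<1$ in \eqref{R}) the interaction term obeys $\lambda\int\phi_w u^2\,dx<T_0^{-2}\int u^2\,dx$ as in \eqref{Rr}; this lets the cross term be absorbed into the auxiliary functionals $G$ and $H_t$ of Lemmas \ref{lem:T0} and \ref{lem:T2}, whose negativity then yields the sign conditions on the large sides uniformly in the other parameter. Without these two ingredients (or the explicit fallback of a $\lambda$-independent $w$ with a $w$-dependent threshold), your step (iii) remains an unverified assertion and the Miranda argument does not close.
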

The proof of this fact is postponed to the Appendix being somehow technical. It strongly uses
the solution $\mathfrak u$ found in Lemma \ref{le:facile} for $\lambda$ small.
In virtue of this, from now on the parameter  $\lambda$ has to be considered  fixed in $(0, \lambda_{0})$.

\medskip

If we define the maps
$$\gamma_{\pm}: u\in H^{1}_{rad}(\mathbb R^{3}) \longmapsto I'(u)[u^{\pm}] \in \mathbb R$$
we can write
$$\mathcal M = \left\{ u\in \mathcal N : \gamma_{+}(u)= 0, u^{\pm}\not\equiv 0\right\}.$$
By the continuity of 
$u\in H^1_{rad}(\mathbb R^3)\mapsto u^+\in H^1_{rad}(\mathbb R^3)$, see \cite[Proposition 7.2]{CR}, 
we deduce that $\gamma_{\pm}$ are continuous.
By the definitions it also holds
\begin{equation}\label{eq:decomp+}
\gamma_{+}(u) = I'(u^{+})[u^{+}]+\lambda\int_{\mathbb R^{3}} \phi_{u^{+}} (u^{-})^{2}dx=\gamma(u^{+})+\lambda\int_{\mathbb R^{3}} \phi_{u^{+}} (u^{-})^{2}dx
\end{equation}
and
\begin{equation}\label{eq:decomp-}
\gamma_{-}(u) = I'(u^{-})[u^{-}]+\lambda\int_{\mathbb R^{3}} \phi_{u^{+}} (u^{-})^{2}dx=\gamma(u^{-})+\lambda\int_{\mathbb R^{3}} \phi_{u^{+}} (u^{-})^{2}dx.
\end{equation}

Due to the continuity of the maps defined above, we have the following useful result.
\begin{prop}
The set $\mathcal M$ is closed.
\end{prop}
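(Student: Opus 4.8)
The plan is to prove that $\mathcal M$ is closed by showing it is the intersection of closed sets, using the continuity of the maps $\gamma$, $\gamma_+$, $\gamma_-$ together with the fact (from Remark~\ref{rem:less}) that $\mathcal N$ and hence $\mathcal M$ is bounded away from zero. Let $\{u_n\}\subset\mathcal M$ with $u_n\to u$ in $H^1_{rad}(\mathbb R^3)$; I want to conclude $u\in\mathcal M$, i.e.\ that $\gamma(u)=0$, $\gamma_+(u)=0$ (hence also $\gamma_-(u)=0$), and $u^\pm\not\equiv 0$.

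First I would handle the two equalities. Since each $u_n\in\mathcal M\subset\mathcal N$ we have $\gamma(u_n)=0$ and $\gamma_+(u_n)=0$ for all $n$. The map $u\mapsto u^+$ is continuous on $H^1_{rad}(\mathbb R^3)$ (cited as \cite[Proposition 7.2]{CR}), and $\gamma$ is continuous; combining these, $\gamma_\pm$ are continuous (as already observed right before the statement). Passing to the limit gives $\gamma(u)=\lim_n\gamma(u_n)=0$ and $\gamma_+(u)=\lim_n\gamma_+(u_n)=0$; then $\gamma_-(u)=\gamma(u)-\gamma_+(u)=0$ as well (using $u^+ + u^- = u$ and linearity of $I'(u)$ in the test function).

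The remaining, and genuinely substantive, point is to rule out $u^+\equiv 0$ or $u^-\equiv 0$, since without this $u$ would lie in $\mathcal N$ but not in $\mathcal M$, and the convergence $u_n\to u$ alone does not obviously prevent the positive or negative part from degenerating. Here is where I would use the quantitative lower bound. From \eqref{eq:decomp+} we have, for each $n$,
\begin{equation*}
0=\gamma_+(u_n)=\gamma(u_n^+)+\lambda\int_{\mathbb R^3}\phi_{u_n^+}(u_n^-)^2\,dx,
\end{equation*}
so $\gamma(u_n^+)=-\lambda\int_{\mathbb R^3}\phi_{u_n^+}(u_n^-)^2\,dx\le 0$, and since $u_n^+\not\equiv 0$ (recall $u_n^+ = 0$ would force the integral term to vanish, making $\gamma(u_n^+)=0$ with $u_n^+\equiv0$, contradicting $u_n\in\mathcal M$), Remark~\ref{rem:less} applies to $u_n^+$: either $\gamma(u_n^+)<0$, or $u_n^+\in\mathcal N$, and in both cases $|u_n^+|_q>L$ and $\|u_n^+\|>L$ with $L>0$ independent of $n$. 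The same argument via \eqref{eq:decomp-} gives $|u_n^-|_q>L$ and $\|u_n^-\|>L$. Since $u_n\to u$ in $H^1_{rad}(\mathbb R^3)$ implies $u_n^\pm\to u^\pm$ in $H^1_{rad}(\mathbb R^3)$ (continuity of the positive/negative part maps) and hence $|u_n^\pm|_q\to|u^\pm|_q$ by the continuous embedding $H^1_{rad}(\mathbb R^3)\hookrightarrow L^q(\mathbb R^3)$, we pass to the limit to get $|u^+|_q\ge L>0$ and $|u^-|_q\ge L>0$, so $u^\pm\not\equiv 0$. Together with $\gamma(u)=0$ and $\gamma_+(u)=0$ this shows $u\in\mathcal M$, proving $\mathcal M$ is closed.

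I expect the application of the uniform lower bound of Remark~\ref{rem:less} to $u_n^\pm$ — specifically the verification that $u_n^\pm$ falls under one of the two cases covered there (membership in $\mathcal N$ or $\gamma<0$) — to be the only step requiring care; the rest is routine use of continuity.
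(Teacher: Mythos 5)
Your proof is correct and follows essentially the same strategy as the paper: continuity of $\gamma,\gamma_\pm$ handles the two equality constraints, and the decompositions \eqref{eq:decomp+}--\eqref{eq:decomp-} give $\gamma(u_n^\pm)<0$ so that Remark \ref{rem:less} yields the uniform lower bound $|u_n^\pm|_q>L$, which survives the limit and rules out degeneration of $u^\pm$. The only (harmless) variation is that you pass to the limit using the $H^1$-continuity of $v\mapsto v^\pm$ plus the continuous embedding into $L^q$, whereas the paper uses boundedness of $\{u_n^\pm\}$ and the compact embedding $H^1_{rad}\hookrightarrow\hookrightarrow L^q$ to identify the $L^q$-limits with $u^\pm$; both are legitimate, and yours is in fact slightly more direct given that the paper already invokes that continuity result.
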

\begin{proof}
%    Let $\{u_n\}\subset H^1_{rad}(\mathbb R^3)$ be a sequence such that $u_n\rightarrow u$ in $H^1_{rad}(\mathbb R^3)$. Then
%    $$\int_{\mathbb R^3}(\abs{\nabla u_n}^2+u_n^2)dx\rightarrow\int_{\mathbb R^3}(\abs{\nabla u}^2+u^2)dx,$$
%    because the integral is the inner product defined on $H^1_{rad}(\mathbb R^3)$. Since in particular we have $u_n\rightharpoonup u$ in $H^1_{rad}(\mathbb R^3)$, by Remark \ref{convergences}
%    $$\int_{\mathbb R^3}f(u_n)u_ndx\rightarrow\int_{\mathbb R^3}f(u)udx,\ \int_{\mathbb R^3}F(u_n)dx\rightarrow\int_{\mathbb R^3}F(u)dx\ \text{ and }\int_{\mathbb R^3}\phi_{u_n}u_n^2dx\rightarrow\int_{\mathbb R^3}\phi_uu^2dx,$$
%    and then, joining all these convergences, we have that the function
%    $$u\in H^1_{rad}(\mathbb R^3)\mapsto\gamma(u):=I'(u)[u]\in\mathbb R^3\quad\text{is continuous because}\quad\gamma(u_n)\rightarrow\gamma(u).$$
%     Since the map $u\in H^1_{rad}(\mathbb R^3)\mapsto u^+\in H^1_{rad}(\mathbb R^3)$ is continuous (see \cite{CR}, Proposition 7.2), a similar use of Remark \ref{convergences} gives us
%    $$u\in H^1_{rad}(\mathbb R^3)\mapsto\gamma_+(u):=I'(u)[u^+]\in\mathbb R^3\quad\text{is continuous because}\quad\gamma_+(u_n)\rightarrow\gamma_+(u).$$
We know that
$$\mathcal M=\gamma^{-1}(\{0\})\cap\gamma_+^{-1}(\{0\})\cap\left\{v\in H^1_{rad}(\mathbb R^3):v^\pm\not\equiv0\right\},$$
and
    \begin{equation*}\label{closed1}
        \gamma^{-1}(\{0\}),  \ \gamma_+^{-1}(\{0\})\quad\text{are closed sets in } H^1_{rad}(\mathbb R^3).
    \end{equation*}
    %Now we claim that for $v\in H^1(\mathbb R^3)$ with $v\not\equiv0$ and $\gamma(v)=0$, for any $q\in (4,6)$ fixed
   % \begin{equation}\label{lowerbound}
      %  \text{there exists $L$ constant with } 0<L<\abs{v}_q.
    %\end{equation}
    % In fact, given $q\in (4,6)$ and $0<\varepsilon \leq 1/2$, from $v\not\equiv0$ and $\gamma(v)=0$ by \eqref{III.4} we have
   % \begin{align*}
     %   \int_{\mathbb R^3} \left( \abs{\nabla v}^2 +v^2\right) dx &< \int_{\mathbb R^3} \left( \abs{\nabla v}^2 +v^2\right) dx+ \lambda\int_{\mathbb R^3} \phi_vv^2 dx\\
      %  &= \int_{\mathbb R^3} f(v)v dx \leq \varepsilon \int_{\mathbb R^3} v^2 dx +M_\varepsilon \int_{\mathbb R^3} \abs{v}^q dx
   % \end{align*}
  %  so that
  %  \[
   %     \frac{1}{2}\int_{\mathbb R^3} \left( \abs{\nabla v}^2 +v^2\right) dx < \int_{\mathbb R^3} \left[ \abs{\nabla v}^2 +(1-\varepsilon)v^2\right] dx < M_\varepsilon \int_{\mathbb R^3} \abs{v}^q dx.
 %   \]
  %  By continuous Sobolev embeddings, for some constant $C>0$,
   %$$C\abs{v}_q^2\leq \frac{1}{2}\norm{v}^2<M_\varepsilon \abs{v}_q^q,$$
   % and since $v\not\equiv0$, the claim holds.
    
    Let us consider $\{u_{n}\}\subset \mathcal M$ such that $u_{n}\to u$ in $H^{1}_{rad}(\mathbb R^{3})$. 
   In particular 
   \begin{equation}\label{eq:primaparte}
   u\in \gamma^{-1}(\{0\})\cap \gamma_+^{-1}(\{0\}).
   \end{equation}     
    From $ \gamma_{+}(u_{n})=0$ and \eqref{eq:decomp+}
    we have
$$
    \gamma(u_{n}^{+}) =      \gamma(u_{n}^{+})  - \gamma_{+}(u_{n})= - \lambda\int_{\mathbb R^{3}} \phi_{u_{n}^{-}} (u_{n}^{+})^{2}
    <0
$$
    and then by Remark \ref{rem:less} it is $|u_{n}^{+}|_{q}> L>0$ with $q\in(4,6)$.
    The sequence $\{u_n^+\}$ is bounded in $H^1_{rad}(\mathbb R^3)$, being $\norm{u_n^+}\leq\norm{u_n}$, and then by the compact embedding $H^1_{rad}(\mathbb R^3)\hookrightarrow\hookrightarrow L^q(\mathbb R^3)$ we get the existence of some function $v$ such that
        \begin{equation}\label{eq:+}
u_n^+\rightarrow v\geq0 \ \text{ in } \  L^q(\mathbb R^3) \ \text{ and }\ |v|_{q}\geq L>0.
    \end{equation}
  Since it is also 
  $$\mathcal M=\gamma^{-1}(\{0\})\cap\gamma_-^{-1}(\{0\})\cap\left\{v\in H^1_{rad}(\mathbb R^3):v^\pm\not\equiv0\right\},$$
  we can argue as before by using $\gamma_{-}, u_{n}^{-}$ and \eqref{eq:decomp-}. In such a way we arrive at
    \begin{equation}\label{eq:-}
    u_n^-\rightarrow w\leq0 \ \text{ in } \  L^q(\mathbb R^3) \ \text{ and }\ |w|_{q}\geq L>0.
    \end{equation}
    But since $u_{n}=u_{n}^{+} + u_{n}^{-} \to u^{+}+u^{-}$ in $L^{q}(\mathbb R^{3})$ it has to be 
    $$u^{+} = v, \ \ u^{-} = w$$ and by \eqref{eq:+} and \eqref{eq:-} we infer 
    \begin{equation}\label{eq:secondaparte}
    u\in\left\{v\in H^1_{rad}(\mathbb R^3):v^\pm\not\equiv0\right\}.
    \end{equation}
    Then \eqref{eq:primaparte} and \eqref{eq:secondaparte} give the conclusion.
    \end{proof}

%The following remarks will be useful henceforth.
\begin{comment}
\textcolor{red}{
\begin{rem}(See \cite[Remark 3.5]{MMS})\label{boundedness}
    If $\{ u_n\}\subset\mathcal M$ from \eqref{lowerbound} there is a constant $L>0$ such that
    \[
        L\leq \abs{u_n}_q,\ \text{for }q\in (4,6)\quad \text{and}\quad L\leq \norm{u_n}, \quad \forall n \in \mathbb N.
    \]
    Furthermore, if  $\{I(u_n)\}$ is bounded (see Lemma \ref{boundedsequence}), there is a constant $M>0$ that together with the previous constant $L>0$ satisfy
    \[
        L\leq \abs{u_n}_q \leq M,\ \text{for }q\in (4,6)\quad \text{and}\quad L\leq \norm{u_n} \leq M, \quad \forall n \in \mathbb N.
    \]
\end{rem}
}
\end{comment}

Also in the asymptotically cubic case there is the unicity of the projection on the Nehari set,
and on the fibers the functional $I$ achieves the maximum on $\mathcal N$.
More precisely we have the following:
\begin{lem}\label{domination}
If $u\in\mathcal N$, then
\begin{enumerate}[label=(\roman*),ref=\roman*]
\item\label{i1} $I(tu)<I(u)$  for every $t> 0,\ t\neq 1$;
\medskip

\item\label{i2}  $ tu\notin\mathcal N$ for every $ t>0,\ t\neq 1.$ More specifically,
$$\gamma(tu)>0 \ \  \text{ for } t\in(0,1)\quad \text{ and  }\quad 
   \gamma(tu)<0 \ \  \text{ for } t>1.$$
\end{enumerate}

    \end{lem}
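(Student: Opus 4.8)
\textbf{Proof plan for Lemma \ref{domination}.}
The natural approach is to study the real function $g(t) := I(tu)$ for $t > 0$ and show it has a unique critical point at $t=1$, which is a strict global maximum. Writing out $g$ explicitly using \eqref{eq:I} and the $3$-homogeneity of the nonlocal term,
\begin{equation*}
g(t) = \frac{t^{2}}{2}\norm{u}^{2} + \frac{\lambda t^{4}}{4}\int_{\mathbb R^{3}}\phi_{u}u^{2}\,dx - \int_{\mathbb R^{3}}F(tu)\,dx,
\end{equation*}
so that $g'(t) = \frac{1}{t}\,\gamma(tu)$, where $\gamma(tu) = t^{2}\norm{u}^{2} + \lambda t^{4}\int_{\mathbb R^{3}}\phi_{u}u^{2}\,dx - \int_{\mathbb R^{3}}f(tu)(tu)\,dx$. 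Since $u\in\mathcal N$ means $\gamma(u)=0$, i.e. $\norm{u}^{2} + \lambda\int\phi_{u}u^{2} = \int f(u)u$, I would substitute this to eliminate $\norm{u}^{2}$ and obtain, after dividing by $t^{4}$,
\begin{equation*}
\frac{\gamma(tu)}{t^{4}} = \int_{\mathbb R^{3}}\left(\frac{f(u)u}{t^{2}} - \lambda\int_{\mathbb R^{3}}\phi_{u}u^{2}\,dx\cdot\frac{1}{t^{2}}\right)\cdots
\end{equation*}
— more cleanly, I would write $\gamma(tu)/t^{4} = (1/t^{2} - 1/t^{2})\cdots$; the key is that the $\norm{u}^{2}$ term and part of the nonlocal term combine, leaving
\begin{equation*}
\frac{\gamma(tu)}{t^{4}} = \frac{1}{t^{2}}\int_{\mathbb R^{3}} f(u)u\,dx - \lambda\left(\frac{1}{t^{2}}-1\right)\int_{\mathbb R^{3}}\phi_{u}u^{2}\,dx - \int_{\mathbb R^{3}}\frac{f(tu)}{(tu)^{3}}\,u^{4}\,dx.
\end{equation*}
Here I use $\int f(tu)(tu)\,dx = t^{4}\int \frac{f(tu)}{(tu)^{3}}u^{4}\,dx$ (interpreting the integrand as $0$ where $u=0$).

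The decisive structural point is that each term on the right is \emph{strictly decreasing} in $t$ on $(0,\infty)$: the factor $1/t^{2}$ multiplying the positive quantity $\int f(u)u\,dx$ (which is positive since $u\in\mathcal N$ is nontrivial, by \eqref{eq:Lq}) is decreasing; $-\lambda(1/t^{2}-1)\int\phi_{u}u^{2}$ is decreasing in $t$ since $1/t^{2}-1$ is decreasing and $\int\phi_u u^2 \ge 0$; and $-\int \frac{f(tu)}{(tu)^{3}}u^{4}\,dx$ is strictly decreasing by assumption \eqref{f_{5}}, because $s\mapsto f(s)/s^{3}$ is strictly increasing on $(0,\infty)$ and, using \eqref{f_{2}}, also on $(-\infty,0)$, hence $t\mapsto f(tu(x))/(tu(x))^{3}$ is nondecreasing for each $x$ and strictly increasing on a set of positive measure (namely where $u\neq0$). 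Therefore $t\mapsto\gamma(tu)/t^{4}$ is strictly decreasing on $(0,\infty)$, and since it vanishes at $t=1$ (because $\gamma(u)=0$), we conclude $\gamma(tu)>0$ for $t\in(0,1)$ and $\gamma(tu)<0$ for $t>1$. This proves \ref{i2}; in particular $tu\notin\mathcal N$ for $t\neq1$.

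For \ref{i1}, I use $g'(t) = \gamma(tu)/t$, which by the above is positive on $(0,1)$ and negative on $(1,\infty)$; thus $g$ is strictly increasing on $(0,1]$ and strictly decreasing on $[1,\infty)$, so $g(t) < g(1)$ for all $t>0$, $t\neq1$, i.e. $I(tu) < I(u)$. I expect the main obstacle to be purely bookkeeping: handling the sign of $f(t)/t^{3}$ and of $f(t)t$ on all of $\mathbb R$ via the oddness \eqref{f_{2}} and \eqref{f_{5}}, and making rigorous the ``strictly increasing on a set of positive measure'' argument that upgrades weak monotonicity of the integral to strict monotonicity — one must note that $u\not\equiv0$ forces $\{u\neq0\}$ to have positive measure and that on this set the integrand's $t$-dependence is genuinely strict. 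Everything else is elementary calculus on $g$. An alternative, slightly slicker route avoids differentiation entirely: directly compare $\gamma(tu)/t^{4}$ at two values $0<t_{1}<t_{2}$ using the same monotonicity term-by-term, then recover \ref{i1} by integrating $g'$; I would pick whichever is shorter to typeset, but the differentiation route above is cleanest given that $\gamma$ is already defined in the paper.
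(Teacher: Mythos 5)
Your overall strategy is essentially the paper's: both arguments rest on \eqref{f_{5}} together with the mismatch between the $t^{2}$ scaling of the local quadratic term and the $t^{4}$ scaling of the nonlocal and nonlinear terms. You package it differently — you prove \eqref{i2} by showing $t\mapsto\gamma(tu)/t^{4}$ is strictly decreasing and then deduce \eqref{i1} by integrating $g'(t)=\gamma(tu)/t$, whereas the paper proves \eqref{i2} by comparing $\gamma(tu)$ directly with $t^{4}\gamma(u)$ and proves \eqref{i1} separately via the pointwise auxiliary function $\xi$ — but the substance is the same, and deriving \eqref{i1} from \eqref{i2} is legitimate since $I$ is $C^{1}$.

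There is, however, one incorrect step. In your decomposition
$$\frac{\gamma(tu)}{t^{4}} = \frac{1}{t^{2}}\int_{\mathbb R^{3}} f(u)u\,dx - \lambda\Bigl(\frac{1}{t^{2}}-1\Bigr)\int_{\mathbb R^{3}}\phi_{u}u^{2}\,dx - \int_{\mathbb R^{3}}\frac{f(tu)}{(tu)^{3}}\,u^{4}\,dx,$$
the middle term is \emph{increasing} in $t$, not decreasing: since $1/t^{2}-1$ decreases and $\lambda\int_{\mathbb R^{3}}\phi_{u}u^{2}\,dx\geq 0$, multiplying by $-\lambda\int_{\mathbb R^{3}}\phi_{u}u^{2}\,dx$ reverses the monotonicity. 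So the term-by-term argument, as written, fails. The fix is immediate: do not substitute $\gamma(u)=0$ at all, but simply divide $\gamma(tu)=t^{2}\norm{u}^{2}+\lambda t^{4}\int\phi_{u}u^{2}\,dx-\int f(tu)(tu)\,dx$ by $t^{4}$ to get
$$\frac{\gamma(tu)}{t^{4}} = \frac{\norm{u}^{2}}{t^{2}} + \lambda\int_{\mathbb R^{3}}\phi_{u}u^{2}\,dx - \int_{\mathbb R^{3}}\frac{f(tu)}{(tu)^{3}}\,u^{4}\,dx.$$
Now the first term is strictly decreasing because $\norm{u}^{2}>0$, the second is constant, and the third is strictly decreasing by \eqref{f_{5}} and \eqref{f_{2}} on the positive-measure set $\{u\neq0\}$, exactly as you argue; the identity $\gamma(u)=0$ is then used only to locate the zero at $t=1$. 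This regrouping is precisely the comparison the paper performs, and with it the remainder of your proof (including the deduction of \eqref{i1}) goes through.
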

\begin{proof}
    To show \eqref{i1}, let $u\in\mathcal N$ and define  the function 
    \[
        \xi(t):=\left(\frac{t^2}{2}-\frac{t^4}{4} \right) \left( \abs{\nabla u}^2+ u^2 \right)+\frac{t^4}{4}f(u)u-F(tu),\quad t\geq 0
    \]
    on the set $\{x\in \mathbb R^3: u(x)\neq0\}$ which has positive measure. For any $t>0$
    \[
        \xi '(t)=t(1-t^2) \left( \abs{\nabla u}^2+ u^2 \right)+t^3 u^4\left[ \frac{f(u)}{u^3}-\frac{f(tu)}{t^3u^3} \right],
    \]
    and thus, by \eqref{f_{5}}, $\xi(t)  < \xi(1)$, for every $t>0$, $t\neq 1$. Then, after integration on $\mathbb R^3$ and using that $\gamma(u)=I'(u)[u]=0$, we have
    \begin{align*}
        I(tu)&= \left( \frac{t^2}{2}-\frac{t^4}{4}\right) \norm{u}^2 +\int_{\mathbb R^3}\left[ \frac{t^4}{4}f(u)u-F(tu)\right] dx\\
        & < \frac{1}{4}\norm{u}^2 +\int_{\mathbb R^3}\left[ \frac{1}{4}f(u)u-F(u)\right] dx\\
        &= I(u).
    \end{align*}
    To prove \eqref{i2}, simply observe that by \eqref{f_{5}}, in case $0<t<1$ we have 
    \[
        \gamma(tu)=I'(tu)[tu]> t^4\left[\norm{u}^2+\lambda \int_{\mathbb R^3}\phi_u u^2dx -\int_{\mathbb R^3}\frac{f(tu)}{t^3}u\, dx\right] > t^4 \gamma(u)= 0,
    \]
    while in case $t>1$ we have
    \[
        \gamma(tu)=I'(tu)[tu]< t^4\left[\norm{u}^2+\lambda \int_{\mathbb R^3}\phi_u u^2dx -\int_{\mathbb R^3}\frac{f(tu)}{t^3}u\, dx\right] < t^4 \gamma(u)= 0
    \]
and the proof is completed.
\end{proof}
A similar assert can be done for the elements of $\mathcal M$.
\begin{lem}\label{domination1}
   If $u\in\mathcal M$, then
   \begin{enumerate}[label=(\roman*),ref=\roman*]
   \item \label{M1}$I(su^++tu^-)<I(u)$ for $s,t\geq0,\ s\neq1\text{ or }t\neq1$; \medskip
   \item\label{M2}$  tu\notin\mathcal M$ for every $t>0,\ t\neq 1$. More specifically,
   $$\gamma_{\pm}(tu)>0 \ \  \text{ for } t\in(0,1)\quad \text{ and  }\quad 
   \gamma_{\pm}(tu)<0 \ \  \text{ for } t>1.$$
   \end{enumerate}

\end{lem}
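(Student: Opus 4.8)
The plan is to mimic the structure of Lemma \ref{domination}, now working with the two-parameter family $s u^+ + t u^-$ instead of the one-parameter family $tu$, and to exploit the decomposition $I(su^++tu^-)=I(su^+)+I(tu^-)+\frac{\lambda}{2}s^2t^2\int_{\mathbb R^3}\phi_{u^-}(u^+)^2\,dx$ together with the analogous decomposition \eqref{eq:decomposI'} for $I'$. First I would record that, since $u\in\mathcal M$, we have $\gamma_+(u)=\gamma_-(u)=0$; combining this with \eqref{eq:decomp+}--\eqref{eq:decomp-} gives $\gamma(u^+)=\gamma(u^-)=-\lambda\int_{\mathbb R^3}\phi_{u^+}(u^-)^2\,dx<0$, so neither $u^+$ nor $u^-$ lies on $\mathcal N$; rather each has negative Nehari value.

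For part \eqref{M1}, I would start from
\[
I(su^++tu^-)=I(su^+)+I(tu^-)+\frac{\lambda}{2}\,s^2t^2\int_{\mathbb R^3}\phi_{u^-}(u^+)^2\,dx .
\]
The cross term is $\le \frac{\lambda}{2}\int_{\mathbb R^3}\phi_{u^-}(u^+)^2\,dx$ when $s^2t^2\le 1$, but in general $s,t$ are unrelated, so a direct comparison fails. The right device is the same function $\xi$ used in Lemma \ref{domination}, but applied separately to $u^+$ and $u^-$ and carrying the extra nonlocal contribution. Concretely, define for $s\ge 0$
\[
\eta_+(s):=\Big(\tfrac{s^2}{2}-\tfrac{s^4}{4}\Big)\|u^+\|^2+\tfrac{s^4}{4}\Big(f(u^+)u^+ \text{-integrand}\Big)-F(su^+)+\tfrac{\lambda}{4}(s^4-s^2)\,\phi_{u^-}(u^+)^2 ,
\]
integrated over $\mathbb R^3$, and similarly $\eta_-(t)$ with $u^-$; here I am splitting the cross term symmetrically, $\frac{\lambda}{2}s^2t^2 = \frac{\lambda}{4}s^4\cdot\frac{\lambda\text{-dummy}}{}$ — more carefully, one writes $\frac{\lambda}{2}s^2t^2 = \frac{\lambda}{4}(s^4+t^4) - \frac{\lambda}{4}(s^2-t^2)^2 \cdot(\cdots)$; the cleaner route is simply: use $\gamma_\pm(u)=0$ to substitute for $\|u^\pm\|^2$ and Young's inequality $s^2t^2\le\frac{1}{2}(s^4+t^4)$ only after having reduced to the diagonal. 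In practice I expect the slickest argument is: show first (part \eqref{M2} below) that the unique maximum of $(s,t)\mapsto I(su^++tu^-)$ on $[0,\infty)^2$ is attained at $(1,1)$ by differentiating and invoking \eqref{f_{5}}, then part \eqref{M1} is immediate. So I would actually prove \eqref{M2} first.

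For part \eqref{M2}, I would compute $\gamma_\pm(tu)=I'(tu)[(tu)^\pm]=I'(tu)[tu^\pm]$. Using \eqref{eq:decomp+},
\[
\gamma_+(tu)=\gamma(tu^+)+\lambda t^4\!\int_{\mathbb R^3}\!\phi_{u^+}(u^-)^2\,dx
= t^4\!\Big[\|u^+\|^2+\lambda\!\int\!\phi_{u^+}(u^-)^2 -\lambda\!\int\!\phi_{u^+}(u^+)^2\!\cdot\!(1-\text{...}) -\!\int\!\tfrac{f(tu^+)}{t^3u^+}(u^+)^2\Big],
\]
and since by \eqref{f_{5}} the map $t\mapsto f(tu^+)/(t^3)$ is strictly increasing on $\{u^+>0\}$, the bracket is strictly larger (resp. smaller) than its value at $t=1$ when $t<1$ (resp. $t>1$). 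But the value of the bracket at $t=1$ is exactly $\gamma_+(u)=0$. Hence $\gamma_+(tu)>0$ for $t\in(0,1)$ and $\gamma_+(tu)<0$ for $t>1$; the computation for $\gamma_-$ is identical using \eqref{eq:decomp-}. In particular $tu\notin\mathcal M$ for $t\ne 1$. Then for \eqref{M1}: along the curve $g(t):=I(tu^++tu^-)=I(tu)$ one already has $g(t)<g(1)$ for $t\ne1$ by Lemma \ref{domination}\eqref{i1} (since $u\in\mathcal M\subset\mathcal N$); for the genuinely two-parameter statement, fix the function $(s,t)\mapsto I(su^++tu^-)$, show via the monotonicity in \eqref{f_{5}} (exactly as in $\xi'$ above, done coordinatewise) that $\partial_s=0$ forces $s=1$ given the constraint structure and similarly $\partial_t=0$ forces $t=1$, and that $(1,1)$ is the unique critical point and a strict maximum, using that the Hessian contribution from $F$ dominates because $f(\tau)/\tau^3<1$ and is strictly increasing.

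The main obstacle I anticipate is the two-parameter maximization in \eqref{M1}: unlike the Nehari case, the cross term $\frac{\lambda}{2}s^2t^2\int\phi_{u^-}(u^+)^2$ couples $s$ and $t$, so one cannot simply optimize in $s$ and $t$ independently, and one must check that the unique interior critical point $(1,1)$ of $I(su^++tu^-)$ is indeed a global maximum on $[0,\infty)^2$ (ruling out escape to the boundary or to infinity). Controlling the behaviour as $(s,t)\to\infty$ requires the asymptotically cubic bound \eqref{f_{4}} ($f(\tau)/\tau^3<1$), which guarantees $F(\tau)\le\tfrac14\tau^4$ up to lower order, so that $I(su^++tu^-)\to-\infty$ or stays bounded appropriately; and the strict inequality throughout comes from \eqref{f_{5}}. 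Once the global-maximum claim is secured, \eqref{M1} follows by evaluating at $(1,1)$, and \eqref{M2} drops out of the sign analysis of $\gamma_\pm(tu)$ along the diagonal.
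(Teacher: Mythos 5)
Your part \eqref{M2} is essentially the paper's argument and is fine: write $\gamma_\pm(tu)$ explicitly, note that the quartic (nonlocal) term scales exactly like $t^4$, and compare the remaining terms with $t^4\gamma_\pm(u)=0$ using \eqref{f_{5}}. The problem is part \eqref{M1}. You circle around the right identity --- indeed $\tfrac{\lambda}{2}s^2t^2=\tfrac{\lambda}{4}(s^4+t^4)-\tfrac{\lambda}{4}(s^2-t^2)^2$ is exactly what is needed --- but you then abandon it in favour of a global--maximization strategy (``show $(1,1)$ is the unique critical point of $(s,t)\mapsto I(su^++tu^-)$ and a global maximum''), and that strategy, as you present it, has two genuine gaps. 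First, the claim that ``$\partial_s=0$ forces $s=1$'' is false for fixed $t\neq1$: $\partial_s I(su^++tu^-)=I'(su^++tu^-)[u^+]$ contains the coupling term $\lambda st^2\int\phi_{u^-}(u^+)^2$, so the zero of $\partial_s$ moves with $t$; uniqueness of the simultaneous zero $(1,1)$ is precisely the nontrivial point and you do not prove it. Second, your control at infinity is backwards: $F(\tau)\le\tau^4/4$ bounds $I$ from \emph{below}, not above, so it cannot by itself give $I(su^++tu^-)\to-\infty$; in the asymptotically cubic regime the $s^4$--coefficient is $\tfrac{\lambda}{4}\int\phi_{u^+}(u^+)^2-\lim_{s\to\infty}s^{-4}\int F(su^+)$, whose sign requires a separate argument (it is negative for $u\in\mathcal M$, but only after using $\gamma_+(u)=0$ and \eqref{f_{4}}, which you do not do).

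The paper avoids all of this with a pointwise algebraic trick: since $I'(u)[u^+]=I'(u)[u^-]=0$, one may subtract $\tfrac{s^4}{4}I'(u)[u^+]+\tfrac{t^4}{4}I'(u)[u^-]$ for free, and then
\[
I(su^++tu^-)=\Bigl(I(su^+)-\tfrac{s^4}{4}I'(u^+)[u^+]\Bigr)+\Bigl(I(tu^-)-\tfrac{t^4}{4}I'(u^-)[u^-]\Bigr)-\tfrac{\lambda}{4}(s^2-t^2)^2\int_{\mathbb R^3}\phi_{u^+}(u^-)^2dx.
\]
The last term is $\le0$ and, crucially, inside each bracket the $4$--homogeneous nonlocal term cancels exactly, leaving precisely the function $\xi_\pm$ of Lemma \ref{domination}, whose maximum over $s>0$ (resp.\ $t>0$) is attained only at $1$ by \eqref{f_{5}}. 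Recombining with $I'(u^\pm)[u^\pm]=I'(u)[u^\pm]-\lambda\int\phi_{u^\mp}(u^\pm)^2$ gives $I(su^++tu^-)<I(u)$ with no need to discuss critical points, boundary behaviour, or limits at infinity. You should commit to this decomposition rather than the maximization route; as written, your proof of \eqref{M1} is incomplete.
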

  
    \begin{proof}
    If $u\in\mathcal M$, then $\gamma(u)=I'(u)[u]=\gamma_{\pm}(u)=I'(u)[u^{\pm}]=0$.
By making use  of \eqref{eq:decomposI'}-\eqref{eq:decomp-} 
      % \textcolor{red}{  It is easy to check that
   % \begin{align*}
       % I(su^++tu^-)&=I(su^+)+I(tu^-)+\frac{\lambda}{2}s^2t^2\int_{\mathbb R^3}\phi_{u^+}(u^-)^2dx,\\
      % I'(u)[u^+]&=I'(u^+)[u^+]+\lambda\int_{\mathbb R^3}\phi_{u^+}(u^-)^2dx,\\
      % I'(u)[u^-]&=I'(u^-)[u^-]+\lambda\int_{\mathbb R^3}\phi_{u^+}(u^-)^2dx
    %\end{align*}
    %and simple computations give} 
    we get
    \begin{eqnarray}
        I(su^++tu^-)&=&I(su^++tu^-)-\frac{s^4}{4}I'(u)[u^+]-\frac{t^4}{4}I'(u)[u^-] \nonumber\\
        &=&\left(I(su^+)-\frac{s^4}{4}I'(u^+)[u^+]\right)+\left(I(tu^-)-\frac{t^4}{4}I'(u^-)[u^-]\right) \nonumber \\
        &&-\frac{\lambda}{4}(s^2-t^2)^2\int_{\mathbb R^3}\phi_{u^+}(u^-)^2dx \nonumber \\
        &<& A+B \label{eq:AB}
    \end{eqnarray}
    where
    $$A:= I(su^+)-\frac{s^4}{4}I'(u^+)[u^+], \quad B:=I(tu^-)-\frac{t^4}{4}I'(u^-)[u^-].$$
    
    Explicitly
    $$A=\left(\frac{s^2}{2}-\frac{s^4}{4}\right)\norm{u^+}^2+\int_{\mathbb R^3}\left[\frac{s^4}{4}f(u^+)u^+-   F(su^+)\right]dx$$
    so that, arguing   as in Lemma \ref{domination} by using the function
    $$\xi_+(s):=\left(\frac{s^2}{2}-\frac{s^4}{4} \right) \left[ \abs{\nabla u^+}^2+ (u^+)^2 \right]+\frac{s^4}{4}f(u^+)u^+-F(su^+),$$
    we obtain $I(su^+)<I(u^+)$ and then
    $$A = I(su^+)-\frac{s^4}{4}I'(u^+)[u^+]<I(u^+)-\frac{1}{4}I'(u^+)[u^+],\quad\text{for }s>0,\ s\neq1.$$
    Similarly we have
    $$B=I(tu^-)-\frac{t^4}{4}I'(u^-)[u^-]<I(u^-)-\frac{1}{4}I'(u^-)[u^-],\quad\text{for }t>0,\ t\neq1,$$
    and consequently by \eqref{eq:AB} we infer
    \begin{align*}
        I(su^++tu^-)&<\left(I(u^+)-\frac{1}{4}I'(u^+)[u^+]\right)+\left(I(u^-)-\frac{1}{4}I'(u^-)[u^-]\right)\\
        &=\left(I(u^+)-\frac{1}{4}I'(u)[u^+]\right)+\left(I(u^-)-\frac{1}{4}I'(u)[u^-]\right)+\frac{\lambda}{2}\int_{\mathbb R^3}\phi_{u^+}(u^-)^2dx\\
        &=I(u^+)+I(u^-)+\frac{\lambda}{2}\int_{\mathbb R^3}\phi_{u^+}(u^-)^2dx\\
        &=I(u),
    \end{align*}
and \eqref{M1} follows. 

For \eqref{M2}, first note that
    \begin{align*}
        \gamma_+(tu)&=I'(tu)[tu^+] \\
        &=I'(tu^+)[tu^+]+\lambda\int_{\mathbb R^3}\phi_{tu^+}(tu^-)^2dx\\
        &=t^2\norm{u^+}^2+\lambda t^4\int_{\mathbb R^3}\phi_{u^+}u^2dx-\int_{\mathbb R^3}f(tu^+)tu^+dx.
    \end{align*}
    Since $\gamma_+(u)=0$, by \eqref{f_{5}}, in case $0<t<1$ 
    \[
        \gamma_+(tu)> t^4\left[\norm{u^+}^2+\lambda \int_{\mathbb R^3}\phi_{u^+}u^2dx -\int_{\mathbb R^3}\frac{f(tu^+)}{t^3}u^+dx\right] > t^4 \gamma_+(u)= 0,
    \]
    while in case $t>1$
    \[
        \gamma_+(tu)< t^4\left[\norm{u^+}^2+\lambda \int_{\mathbb R^3}\phi_{u^+}u^2dx -\int_{\mathbb R^3}\frac{f(tu^+)}{t^3}u^+dx\right] < t^4 \gamma_+(u)= 0.
    \]
    Furthermore from $\gamma(u)=I'(u)[u]=0$ and $\gamma_+(u)=0$, we have $\gamma_-(u)=I'(u)[u^-]=0$, and similar computations, together with \eqref{f_{2}}, give us
    $$\gamma_-(tu)>0,\text{ when }0<t<1,\quad\text{ and }\gamma_-(tu)<0,\text{ when }t>1$$
concluding the proof.
\end{proof}

\begin{rem}\label{domination2}
By using  the same ideas of Lemma \ref{domination1}  we can establish  the following  generalization. 
    
    Assume that $u\in\mathcal N$ is such that
    \begin{enumerate}
        \item $u=u_1+u_2+u_3$, with $\supp(u_i)\cap\supp(u_j)=\emptyset$, for $i<j$ where $i,j=1,2,3$; \medskip
        \item $u_i\not\equiv0$, for every $i=1,2,3$; \medskip
        \item $I'(u)[u_i]=0$, for every $i=1,2,3$. \medskip
       % \item We denote $\phi_i:=\phi_{u_i}$, for $i=1,2,3$.
    \end{enumerate}
    Then we have
    $$I(t_1u_1+t_2u_2+t_3u_3)<I(u),\quad\text{for }t_i\geq0,\ i=1,2,3,\text{ with at least one }t_i\neq1.$$
    Indeed, denoting with $\phi_i:=\phi_{u_i}$ for $i=1,2,3$,  under our assumptions
    \begin{align*}
        I(t_1u_1+t_2u_2+t_3u_3)&=\sum_{i=1}^3I(t_iu_i)+\frac{\lambda}{2}\sum_{i<j}t_{i}^{2}t_{j}^{2}\int_{\mathbb R^3}\phi_iu_j^2dx\\
        I'(u)[u_i]&=I'(u_i)[u_i]+\lambda\sum_{j\neq i}\int_{\mathbb R^3}\phi_ju_i^2dx,
    \end{align*}
    so that
    \begin{align*}
        I(t_1u_1+t_2u_2+t_3u_3)&=I(t_1u_1+t_2u_2+t_3u_3)-\sum_{i=1}^3\frac{t_i^4}{4}I'(u)[u_i]\\
        &=\sum_{i=1}^3\left(I(t_iu_i)-\frac{t_i^4}{4}I'(u_i)[u_i]\right)-\frac{\lambda}{4}\sum_{i<j}(t_i^2-t_j^2)^2\int_{\mathbb R^3}\phi_iu_j^2dx.
    \end{align*}
    By defining the functions
    $$\xi_i(t_i):=\left(\frac{t_i^2}{2}-\frac{t_i^4}{4} \right) \left[ \abs{\nabla u_i}^2+ (u_i)^2 \right]+\frac{t_i^4}{4}f(u_i)u_i-F(t_iu_i),\quad i=1,2,3,$$
    and arguing as in Lemma \ref{domination} and Lemma \ref{domination1} we have that
    $$I(t_iu_i)-\frac{t_i^4}{4}I'(u_i)[u_i]<I(u_i)-\frac{1}{4}I'(u_i)[u_i],\quad\text{ for every }t_i>0,\ t_i\neq1.$$
    Thus
    \begin{align*}
        I(t_1u_1+t_2u_2+t_3u_3)&<\sum_{i=1}^3\left(I(u_i)-\frac{1}{4}I'(u_i)[u_i]\right)\\
        &=\sum_{i=1}^3\left(I(u_i)-\frac{1}{4}I'(u)[u_i]\right)+\frac{\lambda}{2}\sum_{i<j}\int_{\mathbb R^3}\phi_ju_i^2dx\\
        &=\sum_{i=1}^3I(u_i)+\frac{\lambda}{2}\sum_{i<j}\int_{\mathbb R^3}\phi_ju_i^2dx\\
        &=I(u_1+u_2+u_3)\\
        &=I(u),
    \end{align*}
    and we are done.
%\end{rem}
%\begin{rem}\label{uniqueprojection}
%    It is true that
%    \[
%    \text{If } u\in\mathcal N,\text{ then } tu\notin\mathcal N,\ \forall t>0,\ t\neq 1.
%    \]
%\end{rem}
%\begin{rem}\label{uniqueprojection1}
%    A similar claim as in Remark \ref{uniqueprojection} can be done for the set $\mathcal M$:
%    \[
%    \text{If } u\in\mathcal M,\text{ then } tu\notin\mathcal M,\ \forall t>0,\ t\neq 1.
%    \]
%    In fact, first note that
%    \begin{align*}
%        \gamma_+(tu)&=I'(tu)[tu^+]=I'(tu^+)[tu^+]+\lambda\int_{\mathbb R^3}\phi_{tu^+}(tu^-)^2dx\\
%        &=t^2\norm{u^+}^2+\lambda t^4\int_{\mathbb R^3}\phi_{u^+}u^2dx-\int_{\mathbb R^3}f(tu^+)tu^+dx.
%    \end{align*}
%    Since $\gamma_+(u)=0$, by \eqref{f_{5}}, in case $0<t<1$ 
%    \[
%        \gamma_+(tu)> t^4\left[\norm{u^+}^2+\lambda \int_{\mathbb R^3}\phi_{u^+}u^2dx -\int_{\mathbb R^3}\frac{f(tu^+)}{t^3}u^+dx\right] > t^4 \gamma_+(u)= 0,
%    \]
%    while in case $t>1$
%    \[
%        \gamma_+(tu)< t^4\left[\norm{u^+}^2+\lambda \int_{\mathbb R^3}\phi_{u^+}u^2dx -\int_{\mathbb R^3}\frac{f(tu^+)}{t^3}u^+dx\right] < t^4 \gamma_+(u)= 0.
%    \]
%    Furthermore from $\gamma(u)=I'(u)[u]=0$ and $\gamma_+(u)=0$, we have $\gamma_-(u)=I'(u)[u^-]=0$, and similar computations, together with \eqref{f_{2}}, give us
%    $$\gamma_-(tu)>0,\text{ when }0<t<1,\quad\text{ and }\gamma_-(tu)<0,\text{ when }t>1.$$
\end{rem}

The next result will be useful for dealing with minimising sequences.
\begin{prop}\label{boundedsequence}
    Let $\{u_n\}\subset\mathcal M$ be a sequence such that $\{I(u_n)\}$ is  bounded. Then $\{u_n\}$ is bounded.
\end{prop}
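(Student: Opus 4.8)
The plan is to show that a sequence $\{u_n\}\subset\mathcal M$ with $\{I(u_n)\}$ bounded cannot have $\norm{u_n}\to\infty$, arguing by contradiction. Suppose $\norm{u_n}\to\infty$. Since $u_n\in\mathcal M\subset\mathcal N$, the identity $\gamma(u_n)=0$ together with \eqref{eq:positivity} gives, exactly as in \eqref{eq:Ibb},
\[
I(u_n)=I(u_n)-\frac14 I'(u_n)[u_n]=\frac14\norm{u_n}^2+\int_{\mathbb R^3}\Big[\frac14 f(u_n)u_n-F(u_n)\Big]dx\ \geq\ \frac14\norm{u_n}^2,
\]
since the integrand is nonnegative by \eqref{eq:positivity}. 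But the right-hand side tends to $+\infty$, contradicting the boundedness of $\{I(u_n)\}$. Hence $\{u_n\}$ is bounded.

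The key step is the observation that, on $\mathcal N$, the functional $I$ controls $\norm{\cdot}^2$ from above via the sign information \eqref{eq:positivity}: the nonlocal term $\frac{\lambda}{4}\int\phi_{u_n}u_n^2$, which is annihilated together with the kinetic part when subtracting $\frac14 I'(u_n)[u_n]$, leaves behind precisely $\frac14\norm{u_n}^2$ plus the nonnegative remainder $\int[\frac14 f(u_n)u_n-F(u_n)]$. No assumption beyond \eqref{f_{5}}–\eqref{f_{2}} (which yield \eqref{eq:positivity}) and membership in $\mathcal N$ is needed; in particular the asymptotically cubic behaviour \eqref{f_{4}} and the non-quadraticity \eqref{f_{6}} do not enter here. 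I would state the argument in this compact form rather than expanding the algebra, since the computation \eqref{eq:Ibb} has already been carried out in the text.

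There is essentially no obstacle: the only point to be slightly careful about is that membership in $\mathcal M$ is not actually used beyond $u_n\in\mathcal N$, so one may either state the proposition for $\mathcal N$ or simply note that $\mathcal M\subset\mathcal N$ and apply \eqref{eq:Ibb} verbatim. (If one wanted a lower bound on $\norm{u_n}$ as well, Remark \ref{rem:less} already provides $\norm{u_n}>L>0$, but this is not requested.) So the proof is a two-line consequence of the identity leading to \eqref{eq:Ibb} combined with \eqref{eq:positivity}.
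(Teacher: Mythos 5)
Your proof is correct, and it is genuinely different from — and far more economical than — the argument in the paper. You keep the $\tfrac14\norm{u}^2$ term in the identity $I(u)=I(u)-\tfrac14 I'(u)[u]=\tfrac14\norm{u}^2+\int_{\mathbb R^3}\bigl[\tfrac14 f(u)u-F(u)\bigr]dx$, valid for every $u\in\mathcal N$, and then \eqref{eq:positivity} gives the coercivity estimate $I(u)\geq\tfrac14\norm{u}^2$ on all of $\mathcal N$, from which boundedness is immediate; only \eqref{f_{2}} and \eqref{f_{5}} (via \eqref{eq:positivity}) are used, and membership in $\mathcal M$ plays no role beyond $\mathcal M\subset\mathcal N$. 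The paper instead discards the $\tfrac14\norm{u}^2$ term in \eqref{eq:Ibb} and proves the proposition by contradiction through a much heavier route: normalizing $u_n$, invoking Lions' concentration-compactness lemma, splitting into the cases of bounded and unbounded concentration points, translating, and applying Fatou's lemma together with the non-quadraticity condition \eqref{f_{6}} to force $I(u_n)\to\infty$. That machinery is the standard device when the quotient $I-\tfrac1\theta I'[\cdot]$ leaves no positive multiple of the norm behind (as in the asymptotically linear local problem of \cite{MMS}, where one tests with $\tfrac12$), but here the $\tfrac14$ multiplier simultaneously annihilates the nonlocal term and retains $\tfrac14\norm{u}^2$, so the direct bound suffices — indeed the paper itself uses exactly this retained term in the corollary immediately following Proposition \ref{boundedsequence}. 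Your approach buys brevity and drops the hypothesis \eqref{f_{6}} from this step; the paper's approach would only be needed under weaker sign information on $f(t)t-4F(t)$.
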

\begin{proof}
    Assume that the statement does not hold, i.e., there exists a subsequence again denoted by $\{ u_n\}$ such that $\{I(u_n)\}$ is bounded but $\norm{u_n}\rightarrow \infty$ when $n\rightarrow \infty$. Then, up to a subsequence, $I(u_n)\rightarrow l\geq0$ by \eqref{eq:Ibb}.
        
    If $l>0$, we define $v_n:=2\sqrt l\,  u_n/\norm{u_n}$, so that $\norm{v_n}=2\sqrt l$. If $l=0$, we first define $t_n:=1/\norm{u_n}$ and
    then $v_n:=t_n u_n$, so that $\norm{v_n}=1$.
    
    \medskip
    
    {\bf Claim: }  there exist numbers $r,d>0$ and a sequence $\{y_n\} \subset \mathbb R^3$ such that
    \begin{equation}\label{claim}
        \liminf _{n\rightarrow \infty} \int _{B_r(y_n)} v_n^{2}\, dx \geq d>0.
    \end{equation}
    If the claim does not hold, then by Lions' lemma, $v_n \rightarrow 0$ in $L^p(\mathbb R^3)$, for $2<p<6$. Hence, in particular for $q\in(4,6)$, using \eqref{III.4} we obtain
    \[
        \bigg\lvert \int _{\mathbb R^3} F(v_n) dx \biggr\rvert \leq \frac{\varepsilon}{2} \int _{\mathbb R^3}v_n^{\, 2}\, dx+ \frac{C_\varepsilon}{q} \int _{\mathbb R^3} \abs{v _n}^q\, dx,
    \]
    and thus we have
    \[
        \lim _{n\rightarrow \infty} \int _{\mathbb R^3} F(v_n)dx=0.
    \]
    Therefore, when $l>0$
    \[
        I(v_n)=\frac{1}{2}\norm{v_n}^2 +\frac{\lambda}{4}\int _{\mathbb R^3} \phi_{v_n}v_n^{\, 2}\, dx -\int _{\mathbb R^3} F(v_n)dx=2l+\frac{\lambda}{4}\int_{\mathbb R^3}\phi_{v_n} v_n^2dx+o_n(1),
    \]
    while when $l=0$
    \begin{equation}\label{case0}
        \liminf_{n\rightarrow \infty} I(v_n)\geq \liminf_{n\rightarrow \infty} \left[ \frac{1}{2}\norm{v_n}^2 -\int_{\mathbb R^3} F(v_n)dx \right]=\frac{1}{2}.
    \end{equation}
    On the other hand, since $I'(u_n)[u_n]=0$, using Lemma \ref{domination} we get
    \[
        I(tu_n)\leq I(u_n),\quad \forall t>0.
    \]
    Taking $t=t_n:=2\sqrt l/\norm{u_n}$ when $l>0$, and $t=t_n:=1/\norm{u_n}$ when $l=0$, we have
    \[
        I(v_n)\leq I(u_n)=l+o_n(1).
    \]
    This fact implies, when $l=0$, that $I(v_n)\leq I(u_n)=o_n(1)$, contrary to \eqref{case0}, while when $l>0$,
    \[
        2l+o_n(1)< 2l+\frac{\lambda}{4}\int_{\mathbb R^3}\phi_{v_n} v_n^2dx+o_n(1) =I(v_n)\leq I(u_n)=l+o_n(1),
    \]
    or
    \[
      0<l< o_n(1),
    \]
    an absurd. Therefore the Claim holds.
    
   \medskip
   
     However by the Claim we infer a contradiction in both cases: when $\{y_{n}\}$ is bounded or unbounded.
     This will complete the proof.
     
         Hereafter the arguments can be used indistinctly for $l\geq 0$ real number. 

\medskip

    \textbf{Case 1: $\{y_n\}$ is bounded}\\
    Then there is $\widehat r>0$ with $\{y_n\} \subset B_{\widehat r}$. By \eqref{claim}, eventually
    $$\int_{B_r(y_n)}v_n^2dx>\frac{d}{2}.$$
    Thus we can choose $\widetilde r>r+\widehat r$; for this radius it holds $B_r(y_n)\subset B_{\widetilde r}$ and hence
    $$\int_{B_{\widetilde r}}v_n^2dx>\frac{d}{2}.$$
    Since $\norm{v_n}$ is constant, there exists a subsequence still denoted by $\{v_n\}$ such that $v_n\rightharpoonup v$ in $H^1_{rad}(\mathbb R^3)$, from which $v_n\rightarrow v$ in $L^p_{loc}(\mathbb R^3)$ for $1\leq p<6$, and $v_n(x)\rightarrow v(x)$ a.e. $x\in\mathbb R^3$. In particular
    $$\int_{B_{\widetilde r}}v_n^2dx\rightarrow\int_{B_{\widetilde r}}v^2dx,\quad\text{and thus}\quad\int_{B_{\widetilde r}}v^2dx\geq\frac{d}{2}>0,$$
    implying that $v\not\equiv0$. But then there is $\Lambda \subset B_{\widetilde r}$ with $meas (\Lambda)>0$ such that $v(x)\neq 0$, for every $x\in\Lambda$. Thence for $x\in\Lambda$ fixed and a constant $k>0$, $v_n(x)=ku_n(x)/\norm{u_n}\neq0$ for every $n$ large enough, so we can claim that $u_n(x)\neq 0$ eventually. As a consequence of $\norm{u_n}\rightarrow\infty$, $\abs{u_n(x)}\rightarrow\infty$. 
    Hence $\abs{u_n(x)}\rightarrow\infty$ for every $x\in\Lambda$. Since %\eqref{Msequences}
    $$I(u_n)\geq \int_{\mathbb R^3} \left[ \frac{1}{4}f(u_n)u_n-F(u_n) \right] dx\geq \int_{\Lambda}\left[ \frac{1}{4}f(u_n)u_n-F(u_n) \right] dx,$$
    by \eqref{f_{6}} and Fatou's lemma
    \[
        \liminf_{n\rightarrow \infty} I(u_n)\geq\int_{\Lambda}\liminf_{n\rightarrow \infty}\left[\frac{1}{4}f(u_n)u_n-F(u_n)\right] dx=\infty,
    \]
    from which $I(u_n)\rightarrow \infty$, contradicting that $I(u_n)\rightarrow l\in \mathbb R$.
    
    \medskip
    
    \textbf{Case 2: $\{y_n\}$ is unbounded}
    
    In this case we define a new sequence $\widehat v_n:=v_n(\cdot +y_n)$; note that $\norm{\widehat v_n}=\norm{v_n}$ is constant. Thus, up to a subsequence $\widehat v_n\rightharpoonup\widehat v$ in $H^1_{rad}(\mathbb R^3)$, so that $\widehat v_n\rightarrow\widehat v$ in $L^p_{loc}(\mathbb R^3)$ for $1\leq p<6$, and $\widehat v_n(x)\rightarrow\widehat v(x)$ a.e. $x\in\mathbb R^3$. From \eqref{claim}
    $$\liminf_{n\rightarrow\infty}\int_{B_r(y_n)}v_n^2dx=\liminf_{n\rightarrow\infty}\int_{B_r}\widehat v_n^2dx\geq d$$
    and hence
    $$\int_{B_r}\widehat v^2dx\geq d>0,$$
    implying that $\widehat v\not\equiv0$. But then there is $\Lambda \subset B_{r}$ with $meas (\Lambda) >0$ such that $\widehat v(x)\neq0$, for every $x\in\Lambda$. Thence for $x\in\Lambda$ fixed and a constant $k>0$, $\widehat v_n(x)=k u_n(x+y_n)/\norm{u_n}\neq0$ for every $n$ large enough, so that we can claim that $u_n(x+y_n)\neq0$ eventually. As a consequence of $\norm{u_n}\rightarrow\infty$, $\abs{u_n(x+y_n)}\rightarrow\infty$. Hence, as before, $\abs{u_n(x+y_n)}\rightarrow\infty$ for every $x\in\Lambda$. Therefore we have
    \begin{align*}
        I(u_n)&\geq\int_{\mathbb R^3}\left[\frac{1}{4}f(u_n)u_n-F(u_n)\right]dx\\
        &\geq\int_{B_r(y_n)}\left[\frac{1}{4}f(u_n)u_n-F(u_n)\right]dx\\
        &=\int_{B_r}\left[\frac{1}{4}f(u_n(x+y_n))u_n(x+y_n)-F(u_n(x+y_n))\right]dx\\
        &\geq\int_{\Lambda}\left[\frac{1}{4}f(u_n(x+y_n))u_n(x+y_n)-F(u_n(x+y_n))\right]dx,
    \end{align*}
    and by \eqref{f_{6}} and Fatou's lemma
    \[
        \liminf_{n\rightarrow \infty} I(u_n)\geq  \int_{\Lambda}\liminf_{n\rightarrow \infty} \left[\frac{1}{4}f(u_n(x+y_n))u_n(x+y_n)-F(u_n(x+y_n))\right] dx=\infty,
    \]
    from which $I(u_n)\rightarrow \infty$, which is again a contradiction.  
\end{proof}
The above proposition allows us to get the next result.

\begin{coro}
    We have $c:=\inf_{\mathcal M}I >0$.
\end{coro}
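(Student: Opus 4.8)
The plan is short: I would establish a \emph{uniform} positive lower bound for $I$ on $\mathcal M$, which gives $c>0$ at once. Since $\mathcal M\subset\mathcal N$, estimate \eqref{eq:Ibb} already yields $c\geq 0$, so only the strict positivity is at stake.

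The key observation is that for any $u\in\mathcal M$ one has $u\in\mathcal N$, hence $\gamma(u)=I'(u)[u]=0$; repeating the computation leading to \eqref{eq:Ibb},
\[
I(u)=I(u)-\frac14 I'(u)[u]=\frac14\|u\|^2+\int_{\mathbb R^3}\left[\frac14 f(u)u-F(u)\right]dx\geq\frac14\|u\|^2,
\]
where the last inequality uses \eqref{eq:positivity}. On the other hand, Remark \ref{rem:less} applied to $u\in\mathcal N$ provides a constant $L>0$, independent of $u$, with $\|u\|>L$. Hence $I(u)\geq\frac14 L^2$ for every $u\in\mathcal M$, and consequently $c=\inf_{\mathcal M}I\geq\frac14 L^2>0$.

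A slightly longer route, which is the one the placement of Proposition \ref{boundedsequence} suggests, is to argue by contradiction: if $c=0$, take a minimising sequence $\{u_n\}\subset\mathcal M$; since $\{I(u_n)\}$ is bounded, Proposition \ref{boundedsequence} makes $\{u_n\}$ bounded, so up to a subsequence $u_n\rightharpoonup u$ in $H^1_{rad}(\mathbb R^3)$, and Remark \ref{convergences} lets one pass to the limit in all the terms of the identity $I(u_n)=\frac14\|u_n\|^2+\int_{\mathbb R^3}\left[\frac14 f(u_n)u_n-F(u_n)\right]dx$; solving for $\|u_n\|^2$ and using \eqref{eq:positivity} one then gets $\lim_n\|u_n\|^2\leq 0$, contradicting $\|u_n\|>L$. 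I do not expect any real obstacle here; the only mildly delicate point is that the nonlocal term must disappear from $I(u)-\frac14 I'(u)[u]$ — which it does — so that \eqref{eq:positivity} is all that is needed, and, in the second route, that the radial compact embeddings recorded in Remark \ref{convergences} indeed justify the passage to the limit in $\int_{\mathbb R^3} f(u_n)u_n\,dx$ and $\int_{\mathbb R^3} F(u_n)\,dx$.
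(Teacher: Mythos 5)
Your main argument is correct, and it takes a genuinely more direct route than the paper. The paper proves the corollary by contradiction: it takes $\{u_n\}\subset\mathcal M$ with $I(u_n)\to 0$, invokes Proposition \ref{boundedsequence} to get boundedness, extracts a weak limit $\overline u$, and uses Fatou's lemma plus weak lower semicontinuity of the norm to arrive at $0\geq\tfrac14\|\overline u\|^2$, hence $\overline u=0$, which is then set against the fact that $\mathcal M$ is bounded away from zero. You instead note that the identity $I(u)=I(u)-\tfrac14 I'(u)[u]=\tfrac14\|u\|^2+\int_{\mathbb R^3}\left[\tfrac14 f(u)u-F(u)\right]dx$ (in which the nonlocal term indeed cancels) together with \eqref{eq:positivity} gives $I(u)\geq\tfrac14\|u\|^2$, and that the constant $L$ of Remark \ref{rem:less} is uniform over $\mathcal N$, so $I(u)\geq\tfrac14 L^2$ for every $u\in\mathcal M$. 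This needs no minimizing sequence, no compactness and no Fatou, and it yields the stronger quantitative bound $c\geq\tfrac14 L^2$. It also sidesteps a mild delicacy in the paper's version, where the contradiction is drawn from $\|\overline u\|=0$ for a weak limit that need not itself lie in $\mathcal M$; the clean way to close that argument is precisely the uniform bound $\liminf_n\|u_n\|^2\geq L^2$ that your direct proof exploits from the start. Your alternative contradiction route is also sound and is essentially the paper's proof with the compact-embedding convergences of Remark \ref{convergences} replacing Fatou's lemma.
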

\begin{proof}
By \eqref{eq:Ibb} we know that $c\geq 0$. The proof is by contradiction. Assume that there is a sequence $\{u_n\}\subset\mathcal M$ with $I(u_n)\rightarrow 0$. By Proposition \ref{boundedsequence}, $\{u_n\}$ is bounded; let $\overline u \in H^1_{rad}(\mathbb R^3)$ such that, up to a subsequence, $u_n\rightharpoonup \overline u$ in $H^1_{rad}(\mathbb R^3)$ and $u_n(x)\rightarrow \overline u(x)$ a.e. in $\mathbb R^{3}$.
    %\textcolor{red}{Therefore, since $H^1_{rad}(\mathbb R^3) \hookrightarrow \hookrightarrow L^p(\mathbb R^3)$, $u_n\rightarrow \bar u$ in $L^p(\mathbb R^3)$, for $2<p<6$}, and $u_n(x)\rightarrow \bar u(x)$ a.e. $x$ in $\mathbb R^3$. 
    Using Fatou's lemma and \eqref{eq:positivity}
    \begin{align*}
        0&=\liminf_{n\rightarrow \infty} I(u_n)\\
        &= \liminf_{n\rightarrow \infty}\left(I(u_n)-\frac{1}{4}I'(u_n)[u_n]\right)\\
        &\geq\liminf_{n\rightarrow \infty}\frac{1}{4}\norm{u_n}^2+\liminf_{n\rightarrow \infty}\int_{\mathbb R^3}\left[\frac{1}{4}f(u_n)u_n-F(u_n)\right]dx\\
        &\geq \frac{1}{4}\norm{\overline u}^2+\int_{\mathbb R^3}\left[\frac{1}{4}f(\overline u)\overline u-F(\overline u)\right] dx\\
        &\geq\frac{1}{4}\norm{\overline u}^2,
    \end{align*}
    from which $\norm{\overline u}=0$, which is a contradiction since $\mathcal N$ (and hence $\mathcal M$) is bounded away from zero by \eqref{eq:Lq}.
    %. In particular $\abs{u}_2=0$, and hence $\bar u(x)=0$ a.e. $x\in \mathbb R^3$. However by Remark \ref{boundedness} for every $n \in \mathbb N$, $0<L\leq \abs{u_n}_q$, and thus $0<L\leq \abs{\bar u}_q$, an absurd.
\end{proof}

Recall that $\mathcal M$ is not a smooth manifold, hence  the next  result will be fundamental.
\begin{prop}\label{attained}
    If the infimum $c$ of $I$ on $\mathcal M$ is attained, $c$ is a critical value of $I$.
\end{prop}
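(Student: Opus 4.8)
The plan is to argue by contradiction, combining a quantitative deformation of $I$ around the minimiser with a two-parameter topological argument (in the spirit of Bartsch--Weth) showing that the deformed square $\{su^{+}+tu^{-}\}$ still meets $\mathcal M$. Let $u\in\mathcal M$ with $I(u)=c$ and suppose $I'(u)\neq0$ in the dual of $H^{1}_{rad}(\mathbb R^{3})$. By continuity of $I'$ choose $\mu,\delta>0$ with $\norm{I'(v)}_{\star}\geq\mu$ whenever $\norm{v-u}\leq3\delta$; fix $q\in(4,6)$ and $C_{q}>0$ with $|v|_{q}\leq C_{q}\norm{v}$, and shrink $\delta$ so that in addition $C_{q}\delta<\tfrac14\min\{|u^{+}|_{q},|u^{-}|_{q}\}$. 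This minimum is positive: from $\gamma_{+}(u)=0$ and \eqref{eq:decomp+} we get $\gamma(u^{+})=-\lambda\int_{\mathbb R^{3}}\phi_{u^{+}}(u^{-})^{2}dx<0$ (since $u^{\pm}\not\equiv0$), hence $|u^{+}|_{q}>L$ by Remark~\ref{rem:less}, and similarly $|u^{-}|_{q}>L$. Put $g(s,t):=su^{+}+tu^{-}$ for $(s,t)\in[0,\infty)^{2}$. Since $u^{+}$ and $u^{-}$ have disjoint supports and $s,t>0$, we have $g(s,t)^{+}=su^{+}$, $g(s,t)^{-}=tu^{-}$, $g(1,1)=u$, and by orthogonality of $u^{\pm}$ in $H^{1}$, $\norm{g(s,t)-u}^{2}=(s-1)^{2}\norm{u^{+}}^{2}+(t-1)^{2}\norm{u^{-}}^{2}$. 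By Lemma~\ref{domination1}\eqref{M1}, $I(g(s,t))<c$ for every $(s,t)\in[0,\infty)^{2}\setminus\{(1,1)\}$.

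Fix $\rho\in(0,\tfrac12)$ with $\rho\norm{u}<\delta$, so that $g(D)\subset B_{\delta}(u)$, where $D:=[1-\rho,1+\rho]^{2}$; set $\beta:=\max_{\partial D}I\circ g<c$ (compactness of $\partial D$ and the previous strict inequality) and $\varepsilon:=\min\{(c-\beta)/2,\ \mu\delta/8\}>0$. Apply the quantitative deformation lemma to $I$ on $X=H^{1}_{rad}(\mathbb R^{3})$ with the set $S=B_{\delta}(u)$ (so its $2\delta$-neighbourhood is $S_{2\delta}=B_{3\delta}(u)$, where $\norm{I'}_{\star}\geq\mu\geq8\varepsilon/\delta$), the level $c$, and the parameters $\varepsilon,\delta$. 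One obtains $\eta\in C([0,1]\times X,X)$ with: (a) $\eta(1,v)=v$ whenever $v\notin I^{-1}([c-2\varepsilon,c+2\varepsilon])\cap B_{3\delta}(u)$; (b) $I(\eta(1,v))\leq c-\varepsilon$ whenever $v\in B_{\delta}(u)$ and $I(v)\leq c+\varepsilon$; (c) $\norm{\eta(1,v)-v}\leq\delta$ for all $v$; (d) $t\mapsto I(\eta(t,v))$ is non-increasing. Set $\bar g:=\eta(1,\cdot)\circ g:D\to X$, which is continuous. For every $(s,t)\in D$ we have $g(s,t)\in B_{\delta}(u)$ and $I(g(s,t))\leq c\leq c+\varepsilon$, so by (b) $I(\bar g(s,t))\leq c-\varepsilon<c$; in particular $\sup_{D}I\circ\bar g<c$. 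For $(s,t)\in\partial D$ we have $I(g(s,t))\leq\beta\leq c-2\varepsilon$, so $g(s,t)\notin I^{-1}([c-2\varepsilon,c+2\varepsilon])$ and hence, by (a), $\bar g(s,t)=g(s,t)$ on $\partial D$.

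It remains to find a point of $\mathcal M$ among the $\bar g(s,t)$. Consider $\Psi:=(\gamma_{+}\circ\bar g,\ \gamma_{-}\circ\bar g):D\to\mathbb R^{2}$, continuous. On $\partial D$ it equals $(\gamma_{+}\circ g,\gamma_{-}\circ g)$, and by \eqref{eq:decomposI'} (and its analogue obtained by exchanging $u^{+}$ and $u^{-}$), writing $A:=\int_{\mathbb R^{3}}\phi_{u^{+}}(u^{-})^{2}dx>0$,
\[\gamma_{+}(g(s,t))=\gamma(su^{+})+\lambda s^{2}t^{2}A,\qquad \gamma_{-}(g(s,t))=\gamma(tu^{-})+\lambda s^{2}t^{2}A;\]
in particular $\gamma_{+}(g(s,t))$ is non-decreasing in $t$ for fixed $s$, and $\gamma_{-}(g(s,t))$ is non-decreasing in $s$ for fixed $t$. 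Since $\gamma_{\pm}(ru)=\gamma(ru^{\pm})+\lambda r^{4}A$, Lemma~\ref{domination1}\eqref{M2} gives, for all $(s,t)\in D$,
\[\gamma_{+}(g(1-\rho,t))\geq\gamma_{+}((1-\rho)u)>0,\qquad \gamma_{+}(g(1+\rho,t))\leq\gamma_{+}((1+\rho)u)<0,\]
\[\gamma_{-}(g(s,1-\rho))\geq\gamma_{-}((1-\rho)u)>0,\qquad \gamma_{-}(g(s,1+\rho))\leq\gamma_{-}((1+\rho)u)<0.\]
Thus $\Psi$ satisfies, on $\partial D$, the sign conditions of the Poincar\'e--Miranda theorem on the box $D$, which therefore provides $(s_{0},t_{0})$ in the interior of $D$ with $\Psi(s_{0},t_{0})=0$ (it cannot lie on $\partial D$, where one component of $\Psi$ is strictly signed). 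Put $w:=\bar g(s_{0},t_{0})$; then $\gamma_{+}(w)=\gamma_{-}(w)=0$, so $\gamma(w)=\gamma_{+}(w)+\gamma_{-}(w)=0$, i.e. $w\in\mathcal N$. Finally, from $\norm{w-g(s_{0},t_{0})}\leq\delta$ and the pointwise inequality $|a^{\pm}-b^{\pm}|\leq|a-b|$,
\[|w^{+}|_{q}\geq|s_{0}u^{+}|_{q}-|w^{+}-g(s_{0},t_{0})^{+}|_{q}\geq(1-\rho)|u^{+}|_{q}-C_{q}\delta\geq\tfrac14|u^{+}|_{q}>0,\]
and likewise $|w^{-}|_{q}>0$, so $w^{\pm}\not\equiv0$ and $w\in\mathcal M$. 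But $I(w)<c=\inf_{\mathcal M}I$, a contradiction. Hence $I'(u)=0$ and $c$ is a critical value of $I$.

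The routine part is the deformation itself: once $I'(u)\neq0$ the energy is pushed strictly below $c$ on all of $g(D)$ essentially for free. The real obstacle is ensuring that the deformed square still intersects $\mathcal M$: one must arrange the ordering of the constants ($\mu$, then $\delta$, then $\rho$, then $\varepsilon$) so that $\eta$ acts as the identity on $g(\partial D)$, which is what lets the Poincar\'e--Miranda argument be run with the undeformed and explicitly computable boundary map; and one must upgrade the vanishing of $\gamma_{\pm}(w)$ to a genuine sign change $w^{\pm}\not\equiv0$, which is exactly where the quantitative lower bound of Remark~\ref{rem:less} and the $1$-Lipschitz estimate $|a^{\pm}-b^{\pm}|\leq|a-b|$ are needed.
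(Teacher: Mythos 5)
Your proof is correct and follows essentially the same strategy as the paper's: assume $I'(u)\neq0$, use the quantitative deformation lemma to push the two-parameter surface $\{su^{+}+tu^{-}\}$ strictly below the level $c$, and then recover a point of $\mathcal M$ on the deformed surface via the Poincar\'e--Miranda theorem applied to $(\gamma_{+},\gamma_{-})$, contradicting $c=\inf_{\mathcal M}I$. Two remarks on the details. First, you arrange $\bar g=g$ on $\partial D$ through the energy level: from $\varepsilon\leq(c-\beta)/2$ you get $I(g(s,t))\leq\beta\leq c-2\varepsilon$ on $\partial D$, but this excludes membership in $I^{-1}([c-2\varepsilon,c+2\varepsilon])$ only when the inequality is strict, which can fail at the point of $\partial D$ where the maximum $\beta$ is attained; take e.g.\ $\varepsilon:=\min\{(c-\beta)/4,\ \mu\delta/8\}$ to repair this. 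The paper instead fixes the large square $Q=[\tfrac12,\tfrac32]^{2}$ and shrinks $\delta$ (using $6\delta<L\leq\norm{u^{\pm}}$) so that $h(\partial Q)$ lies outside $B_{2\delta}(u)$, making $\eta(1,\cdot)$ the identity there for metric rather than energetic reasons; both devices work. Second, your explicit verification that $w^{\pm}\not\equiv0$ --- via the estimate $\norm{\eta(1,v)-v}\leq\delta$ from the deformation lemma, the pointwise bound $|a^{\pm}-b^{\pm}|\leq|a-b|$, and the lower bound on $|u^{\pm}|_{q}$ from Remark~\ref{rem:less} --- addresses a point that is genuinely needed to conclude $w\in\mathcal M$ (rather than merely $\gamma_{\pm}(w)=0$) and that the paper's proof passes over in silence; this is an improvement in completeness.
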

\begin{proof}
    Let be $u\in\mathcal M$ such that $I(u)=c=\inf_{\mathcal M}I$. We need  to prove that $I'(u)=0$. By contradiction assume that $I'(u)\neq 0$. Since $I$ is a $C^1$ functional on $H^1_{rad}(\mathbb R^3)$, there are $\delta >0$ and $\varepsilon _0>0$ such that
    \begin{equation}\label{nonullderivative}
\forall  v\in H^1_{rad}(\mathbb R^3) \text{ with } \norm{v-u}\leq 2\delta, \text{ we have } \norm{I'(v)}_\star \geq \varepsilon _0.
    \end{equation}
    Since $u\in \mathcal M$, we know by Remark \ref{rem:less} there is $L>0$ such that $\norm{u^+},\norm{u^-}> L>0$ and we can assume that $6\delta<L$.
    \vspace{3mm}
    
    On the set $Q:=\left[ \frac{1}{2}, \frac{3}{2}\right] \times \left[ \frac{1}{2}, \frac{3}{2}\right]$ we define the function 
    $$h:(\alpha,\beta)\in Q\longmapsto  \alpha u^+ + \beta u^-\in H^1_{rad}(\mathbb R^3).$$ Since
$        I'(u)[u^{\pm}]=0$,    by Lemma \ref{domination1}, $I(su^++tu^-)<I(u)$, for any $s,t>0$, with $s\neq1$ or $t\neq1$, and then
    \begin{equation}\label{levelc}
        I(h(\alpha,\beta))=I(\alpha u^++\beta u^-) <I(u)=c,
    \end{equation}
    for every pair $(\alpha,\beta)\in Q$ with $\alpha \neq 1$ or $\beta \neq 1$. Therefore
    \[
    c_0:=\max_{\partial Q} I\circ h < c.
    \]
    Defining $\varepsilon :=\min \{ (c-c_0)/2, \delta \varepsilon _0/8\}$, by \eqref{nonullderivative} we have
    \[
        v\in I^{-1}\left( [c-2\varepsilon, c+2\varepsilon ]\right) \cap B_{2\delta} (u)\quad \Longrightarrow \quad \norm{I'(v)}_\star \geq \varepsilon _0.
    \]
    But then, by the deformation lemma (see \cite[Lemma 2.3]{W}) there exists a deformation $\eta \in C([0,1]\times H^1_{rad}(\mathbb R^3),H^1_{rad}(\mathbb R^3))$ such that
   \begin{enumerate}[label=(\roman*),ref=\roman*]
        \item \label{DL:i}$\eta(t,v)=v$, if $t=0$ or $v\notin I^{-1}\left( [c-2\varepsilon, c+2\varepsilon ]\right) \cap B_{2\delta} (u)$.
        \vspace{2mm}
        \item \label{DL:ii}Denoting $I^a:=\{ u \in H^1_{rad}(\mathbb R^3): I(u)\leq a\}$ for $a\in \mathbb R$, then $\eta(1, I^{c+\varepsilon}\cap B_{\delta}(u_c) )\subset I^{c-\varepsilon}$.
        \vspace{2mm}
        \item\label{DL:iii}For every $v\in H^1_{rad}(\mathbb R^3)$, $I(\eta(\cdot, v))$ is non increasing.
    \end{enumerate}
    In particular, from \eqref{DL:i} and \eqref{DL:iii} it holds
    \begin{equation}\label{iandiii}
        I(\eta(1,v))\leq I(\eta(0,v)) =I(v),\ \text{for every } v \in H^1_{rad}(\mathbb R^3).
    \end{equation}
    For $(\alpha, \beta) \in  Q$ we have two possibilities; if $\alpha \neq 1$ or $\beta \neq 1$, from \eqref{levelc} and \eqref{iandiii}, 
    \[
        I(\eta(1,h(\alpha,\beta)))\leq I(h(\alpha, \beta))<c.
    \]
    If $(\alpha,\beta)=(1,1)$, so that $h(1,1)=u$, it holds $h(1,1) \in I^{c+\varepsilon} \cap B_\delta (u)$, and by \eqref{DL:ii}
    \[
        I(\eta(1,h(1,1))) \leq c-\varepsilon < c.
    \]
    Then
    \begin{equation}\label{underc2}
        \max _{(\alpha,\beta)\in Q} I(\eta(1,h(\alpha,\beta))) <c.
    \end{equation}
    
    \medskip
    {\bf Claim: }we have
    \begin{equation*}%\label{S1intersectsdeformation}
        \eta(1,h(Q))\cap\mathcal M \neq \emptyset.
    \end{equation*}
    Indeed, for $(\alpha, \beta) \in Q$, we define the functions given by $$\varphi(\alpha,\beta):=\eta(1, h(\alpha, \beta))\in H^1_{rad}(\mathbb R^3),$$ and
    \[
        \Psi(\alpha, \beta):=(\psi_1(\alpha, \beta),\psi_2(\alpha, \beta)):=\left( I'(\varphi (\alpha,\beta))[\varphi (\alpha,\beta) ^+], I'(\varphi  (\alpha,\beta))[\varphi (\alpha,\beta) ^-] \right).
    \]
    The claim holds if there exists $(\alpha _0,\beta _0)\in Q$ such that $\Psi(\alpha _0,\beta _0)=(0,0)$. Since
    \begin{align}\label{alpha}
        \norm{u-h(\alpha,\beta)} &= \norm{(u^+ + u^-)-(\alpha u^+ +\beta u^-)} \nonumber \\ 
        &= \abs{1-\alpha}\norm{u^+}+\abs{1-\beta}\norm{u^-} \nonumber \\
        & \geq \abs{1-\alpha}\norm{u^+} \nonumber \\
        & \geq \abs{1-\alpha} L \nonumber \\
        &> \abs{1-\alpha} 6\delta \nonumber \\
        &> 2\delta \iff \alpha < \frac{2}{3} \text{ or } \alpha > \frac{4}{3},
    \end{align}
    using \eqref{DL:i} and \eqref{alpha}, for $\alpha=\frac{1}{2}$ and for every $\beta \in [\frac{1}{2},\frac{3}{2}]$ we have $\varphi (\frac{1}{2},\beta)=h(\frac{1}{2},\beta)$, so that
    \begin{align*}
        \Psi(\textstyle{\frac{1}{2}}, \beta)&=\left( I'(h(\textstyle{\frac{1}{2}},\beta))[h  (\textstyle{\frac{1}{2}},\beta)^+], I'(h (\textstyle{\frac{1}{2}},\beta))[h  (\textstyle{\frac{1}{2}},\beta)^-] \right)\\
        &=\left( I'(\textstyle{\frac{1}{2}}u^++\beta u^-)[\textstyle{\frac{1}{2}}u^+],I'(\textstyle{\frac{1}{2}}u^++\beta u^-)[\beta u^-]\right).
    \end{align*}

    By \eqref{eq:decomposI'}, \eqref{eq:decomp+} and \eqref{M2} of Lemma \ref{domination1} we infer
    \begin{align*}
        I'(\textstyle{\frac{1}{2}}u^++\beta u^-)[\textstyle{\frac{1}{2}}u^+]&=I'(\textstyle{\frac{1}{2}}u^+)[\textstyle{\frac{1}{2}}u^+]+\frac{\beta^2}{4}\lambda\displaystyle{\int_{\mathbb R^3}\phi_{u^+}(u^-)^2dx}\\
        &\geq I'(\textstyle{\frac{1}{2}}u^+)[\textstyle{\frac{1}{2}}u^+]+\textstyle{\left(\frac{1}{2}\right)^4}\lambda\displaystyle{\int_{\mathbb R^3}\phi_{u^+}(u^-)^2dx}\\
        &=\gamma_+(\textstyle{\frac{1}{2}}u)>0,
    \end{align*}
    from which we obtain
    \begin{equation}\label{squareleft}
        \psi_1(\textstyle{\frac{1}{2}}, \beta)= I'(\textstyle{\frac{1}{2}}u^++\beta u^-)[\textstyle{\frac{1}{2}}u^+]>0, \quad \text{for every } \beta \in [\textstyle{\frac{1}{2}},\textstyle{\frac{3}{2}}].
    \end{equation}
    If $\alpha=\frac{3}{2}$, for every $\beta \in [\frac{1}{2},\frac{3}{2}]$, using \eqref{DL:i} above and \eqref{alpha}, we have $\varphi (\frac{3}{2},\beta)=h(\frac{3}{2},\beta)$, and 
    %by using \eqref{M2} of Lemma \ref{domination1} again
    arguing as before we get
    \begin{align*}
        I'(\textstyle{\frac{3}{2}}u^++\beta u^-)[\textstyle{\frac{3}{2}}u^+]&=I'(\textstyle{\frac{3}{2}}u^+)[\textstyle{\frac{3}{2}}u^+]+\textstyle{\left(\frac{3}{2}\right)}^2\beta^2\lambda\displaystyle{\int_{\mathbb R^3}\phi_{u^-}(u^+)^2dx}\\
        &\leq I'(\textstyle{\frac{3}{2}}u^+)[\textstyle{\frac{3}{2}}u^+]+\textstyle{\left(\frac{3}{2}\right)^4}\lambda\displaystyle{\int_{\mathbb R^3}\phi_{u^-}(u^+)^2dx}\\
        &=\gamma_+(\textstyle{\frac{3}{2}}u)<0,
    \end{align*}
    so that
    \begin{equation}
        \psi_1(\textstyle{\frac{3}{2}}, \beta)= I'(\textstyle{\frac{3}{2}}u^++\beta u^-)[\textstyle{\frac{3}{2}}u^+] <0, \quad \text{for every } \beta \in [\textstyle{\frac{1}{2}},\textstyle{\frac{3}{2}}].
    \end{equation}
    
    Analogously we have, by using \eqref{eq:decomposI'}, \eqref{eq:decomp-} and \eqref{M2} of Lemma \ref{domination1},
    \begin{align}
        \psi_2(\alpha,\textstyle{\frac{1}{2}})&= I'(\textstyle{\alpha u^++\frac{1}{2}}u^-)[\textstyle{\frac{1}{2}}u^-] >0, \quad \text{for every } \alpha \in [\textstyle{\frac{1}{2}},\textstyle{\frac{3}{2}}],\\ 
        \vspace{3mm}
        \psi_2(\alpha,\textstyle{\frac{3}{2}})&= I'(\textstyle{\alpha u^++\frac{3}{2}}u^-)[\textstyle{\frac{3}{2}}u^-] <0, \quad \text{for every } \alpha \in [\textstyle{\frac{1}{2}},\textstyle{\frac{3}{2}}]. \label{up}
    \end{align}
    Since $\Psi$ is continuous on $Q$, because $\eta$, $h$ are continuous, and \eqref{squareleft}-\eqref{up} hold, by Miranda's theorem (see \cite{M}), there is $(\alpha _0,\beta _0)\in Q$ such that $\Psi(\alpha _0,\beta _0)=(0,0)$. Therefore, the Claim
     %\eqref{S1intersectsdeformation}
      holds.
    
    But then $I(\varphi(\alpha _0,\beta _0))\geq \min_{\mathcal M} I=c$, in contradiction with \eqref{underc2}. Therefore $I'(u)=0$ and the proof is completed.
\end{proof}

\section{Proof of the main result} \label{sec:proof}%Theorem \ref{theorem}}
Before proving our result let us observe the following:
%\begin{rem}\label{lowerboundgamma}
%    We claim that
    \begin{equation}\label{eq:delta}
        \exists \delta >0 \text{ such that } \forall w\in H^1_{rad}(\mathbb R^3) \text{ with } \norm{w}\leq\delta : \gamma_{\pm}(w)=I'(w)[w^{\pm}]\geq \frac{1}{4} \norm{w^{\pm}}^2.    
    \end{equation}
    In fact, fixing $0< \varepsilon < 1/2$, by \eqref{III.4} we have
    \begin{align*}
        \gamma_{\pm}(w) &\geq I'(w^{\pm})[w^{\pm}]\geq\norm{w^{\pm}}^2 -\int_{\mathbb R^3} f(w^{\pm})w^{\pm} dx\\
         &\geq  \norm{w^{\pm}}^2 -\varepsilon\int_{\mathbb R^3}(w^{\pm})^2 dx-C_\varepsilon\int_{\mathbb R^3}\abs{w^{\pm}}^q dx\\
         &= \frac{1}{2}\norm{w^{\pm}}^2 - C_\varepsilon \int_{\mathbb R^3} \abs{w^{\pm}}^q dx+\frac{1}{2}\int_{\mathbb R^3}\abs{\nabla w^{\pm}}^2 dx +\left(\frac{1}{2}-\varepsilon\right)\int_{\mathbb R^3}(w^{\pm})^2 dx\\
         &\geq \frac{1}{4}\norm{w^{\pm}}^2 + \left( \frac{1}{4}\norm{w^{\pm}}^2 - C_\varepsilon \norm{w^{\pm}}^q \right).
    \end{align*}
Choosing $\delta\leq1/(4C_\varepsilon)^{1/(q-2)}$ we conclude.
%\end{rem}

\medskip

\subsection{Proof of Theorem \ref{theorem}}

%\begin{proof}[\unskip\nopunct]% remove the word "proof"
    Denote $c=\inf_{\mathcal M} I>0$ and let $\{u_n\} \subset\mathcal M$ be a minimizing sequence, i.e., $I(u_n)\rightarrow c$. By Proposition \ref{boundedsequence} $\{u_n\}$ is bounded and we can  assume that $u_n \rightharpoonup u$ in $H^1_{rad}(\mathbb R^3)$. Since $u_n\in\mathcal M$, $\gamma_+(u_n)=I'(u_n)[u_n^+]=0$ and $\gamma_-(u_n)=I'(u_n)[u_n^-]=0$. Then using \eqref{eq:decomp+} we get
    $$0=\gamma_{+}(u_{n})= \gamma(u_{n}^{+}) + \lambda \int_{\mathbb R^{3}} \phi_{u_{n}^{-}}(u_{n}^{+})^{2}dx$$
    implying that $\gamma(u_{n}^{+})<0$. Similarly, by using \eqref{eq:decomp-}, we get $\gamma(u_{n}^{-})<0$; but then
    by Remark \ref{rem:less}, $\abs{u_n^+}_q,\abs{u_n^-}_q >L>0$, and using the compact embedding $H^1_{rad}(\mathbb R^3) \hookrightarrow \hookrightarrow L^q(\mathbb R^3)$, we have $\abs{u^+}_q,\abs{u^-}_q \geq L>0$. Hence $u^+,u^-\not\equiv 0$, and $u=u^+ +u^-$ is a sign-changing function. 
    
   %\color{red}
   %The function $u$ is the candidate to be the minimum of $I$ on $\mathcal M$; \\
   %\color{blue}
   The function $u$ is the candidate to be the element where the minimum of $I$ on $\mathcal M$ is attained; 
   %\color{black}
   however for this we need to show that $u\in \mathcal M$.
    
    Note that $$u_n^+ \rightharpoonup u^+  \ \text{ and } \ \ u_n^- \rightharpoonup u^- \text{ in } \ H^1_{rad}(\mathbb R^3).$$
     Indeed, $\norm{u_n^\pm}\leq \norm{u_n}$ so that the sequences $\{u_n^+\}$, $\{u_n^-\}$ are also bounded. Assume that $u_n^+ \rightharpoonup v\geq0$ and $u_n^- \rightharpoonup w\leq0$ in $H^1_{rad}(\mathbb R^3)$. Thus $u_n=u_n^+ +u_n^- \rightharpoonup v + w$ in $H^1_{rad}(\mathbb R^3)$, and then $u=v+ w$, from which $u^+=v, u^{-}=w$. 
    
    \medskip
    
  {\bf Claim:} it holds
    \begin{equation}\label{strongconvergence+}
        u_n^\pm \rightarrow u^\pm \text{ in } H^1_{rad}(\mathbb R^3).
    \end{equation}
    
    We just prove the claim concerning the positive parts, since similarly it can be shown that $u_n^- \rightarrow u^-$ in $H^1_{rad}(\mathbb R^3)$.
    
    Suppose that this is not true, i.e., $\norm{u^+}< \liminf_{n\rightarrow \infty} \norm{u_n^+}$. Since $u_n^\pm \rightharpoonup u^\pm$ in $H^1_{rad}(\mathbb R^3)$, by Remark \ref{convergences}
    \[
        \int_{\mathbb R^3} f(u_n^\pm) u_n^\pm dx \rightarrow  \int_{\mathbb R^3} f(u^\pm) u^\pm dx,\qquad\int_{\mathbb R^3} F(u_n^\pm) dx \rightarrow \int_{\mathbb R^3} F(u^\pm) dx,
    \]
    besides of
    \[
        \int_{\mathbb R^3} \phi_{u_n^+} (u_n^\pm)^2 dx \rightarrow \int_{\mathbb R^3} \phi_{u^+} (u^\pm)^2 dx\quad\text{and}\quad\int_{\mathbb R^3} \phi_{u_n^-} (u_n^\pm)^2 dx \rightarrow \int_{\mathbb R^3} \phi_{u^-} (u^\pm)^2 dx.
    \]
    Therefore
    \begin{align}\label{negativegamma+}
        \gamma_+(u)< \liminf_{n\rightarrow \infty} \left[ \norm{u_n^+}^2+ \lambda \int_{\mathbb R^3} \phi_{u_n^+}u_n^2 dx -\int_{\mathbb R^3} f(u_n^+) u_n^+ dx\right]
        = \liminf_{n\rightarrow \infty} \gamma_+(u_n)=0. 
    \end{align}
    Also recall that $\gamma_-(u_n)=0$, and since
    $$\norm{u^-}^2\leq\liminf_{n\rightarrow\infty}\norm{u_n^-}^2,$$
    we have
    \begin{align}\label{negativegamma-}
        \gamma_-(u)\leq\liminf_{n\rightarrow\infty}\left[\norm{u_n^-}^2+\lambda\int_{\mathbb R^3}\phi_{u_n^-}u_n^2dx-\int_{\mathbb R^3}f(u_n^-) u_n^-dx\right]
        =\liminf_{n\rightarrow \infty} \gamma_-(u_n)=0.
    \end{align}
    Now we take $\delta>0$ satisfying \eqref{eq:delta} 
    and we choose $\zeta>0$ satisfying $\zeta\norm{u}\leq\delta$. Thence
    $$\gamma_+(\zeta u^++tu^-)\geq\frac{\zeta^2}{4}\norm{u^+}^{2}>0,\quad\text{ for every }t\in[\zeta,1],$$
    and
    $$\gamma_-(su^++\zeta u^-)\geq\frac{\zeta^2}{4}\norm{u^-}^{2}>0,\quad\text{ for every }s\in[\zeta,1].$$
    On the other hand, by \eqref{negativegamma+}, for every $t\in[\zeta,1]$ we have
    \begin{align*}
        \gamma_+(u^++tu^-)&=I'(u^+)[u^+]+t^2\lambda\int_{\mathbb R^3}\phi_{u^+}(u^-)^2dx\\
        &\leq I'(u^+)[u^+]+\lambda\int_{\mathbb R^3}\phi_{u^+}(u^-)^2dx=\gamma_+(u)<0,
    \end{align*}
    and by \eqref{negativegamma-}, for every $s\in[\zeta,1]$
    \begin{align*}
        \gamma_-(su^++u^-)&=I'(u^-)[u^-]+s^2\lambda\int_{\mathbb R^3}\phi_{u^+}(u^-)^2dx\\
        &\leq I'(u^-)[u^-]+\lambda\int_{\mathbb R^3}\phi_{u^+}(u^-)^2dx=\gamma_-(u)\leq0.
    \end{align*}
    Hence, by Miranda's theorem, there is a point $(\alpha,\beta)\in[\zeta,1]\times[\zeta,1]$ for which the function given by
    $$\Phi(s,t):=\left(\gamma_+(su^++tu^-),\gamma_-(su^++tu^-)\right),\quad (s,t)\in[\zeta,1]\times[\zeta,1],$$
    satisfies $\Phi(\alpha,\beta)=(0,0)$, i.e., $\alpha u^++\beta u^-\in\mathcal M$. Therefore using \eqref{f_{2}}
    \begin{eqnarray*}
        I(\alpha u^+ + \beta u^-)&=& I(\alpha u^+ + \beta u^-)-\frac{1}{4}I'(\alpha u^+ + \beta u^-)[\alpha u^+ + \beta u^-]\\
        &=&\frac{\alpha^2}{4}\norm{u^+}^2+\frac{\beta^2}{4}\norm{u^-}^2+\int_{\mathbb R^3}\left[\frac{1}{4}f(\alpha u^+)\alpha u^+-F(\alpha u^+)\right]dx\\
        &&+\int_{\mathbb R^3}\left[\frac{1}{4}f(-\beta u^-)(-\beta u^-)-F(-\beta u^-)\right]dx,
    \end{eqnarray*}
    \color{black}
    and since $0<\alpha<1$, $0<\beta\leq1$ and \eqref{eq:strictincreasing} holds,
    % by \eqref{f_{5}} the function $t\mapsto\textstyle{\frac{1}{4}f(t)t-F(t)}$ is strictly increasing on $[0,\infty)$, 
     we have (again by \eqref{f_{2}})
    \begin{align*}
        I(\alpha u^+ + \beta u^-)&<\frac{1}{4}\norm{u}^2+\int_{\mathbb R^3}\left[\frac{1}{4}f(u)u-F(u)\right]dx\\
        &<\liminf_{n\rightarrow\infty}\frac{1}{4}\norm{u_n}^2+\liminf_{n\rightarrow\infty}\int_{\mathbb R^3}\left[\frac{1}{4}f(u_n)u_n-F(u_n)\right]dx,
    \end{align*}
    where the last inequality follows from our assumption and Fatou's lemma. Thus
    \begin{align*}
        I(\alpha u^+ + \beta u^-)&< \liminf_{n\rightarrow \infty}\left(I(u_n)-\frac{1}{4}I'(u_n)[u_n]\right)\\
        &= \liminf_{n\rightarrow \infty} I(u_n)\\
        &=c
    \end{align*}
    an absurd. Then \eqref{strongconvergence+} follows. 
    
    \medskip
    
    In virtue of the Claim, $u_n \rightarrow u$ in $H^1_{rad}(\mathbb R^3)$. Since $\mathcal M$ is closed in $H^1_{rad}(\mathbb R^3)$, $u\in\mathcal M$, so that $I(u)=c=\inf_{\mathcal M} I$. By Proposition \ref{attained}, $u$ is a critical point of $I$, and thus, a radial sign-changing solution of \eqref{ourproblem} in $H^1_{rad}(\mathbb R^3)$
    which has minimal energy among all the radial and sign-changing solutions.

    \medskip
    
    Let us see that $u$ changes the  sign exactly once. We adapt the proof from the arguments  given in \cite{ASS} and \cite{CCN}.
    In fact by the regularity of the solution  $u$ (see e.g. \cite{R}), the set $E:=\{x\in \mathbb R^3:u(x)\neq0\}$ is open. If $E$ has more than two components, since $u$ changes of sign, without lost of generality we can assume the existence of connected components $E_1$, $E_2$ and $E_3$ such that
    %\color{blue}
    we have the decomposition $u=u_1+u_2+u_3$ with
    %\color{black}
    \begin{align*}
        u_1&>0\text{ on }E_1,\text{ and }u_1=0\text{ on }E_2\cup E_3,\\
        u_2&<0\text{ on }E_2,\text{ and }u_2=0\text{ on }E_1\cup E_3,\\
        u_3&\neq0\text{ on }E_3,\text{ and }u_3=0\text{ on }E_1\cup E_2.
    \end{align*}
    Also we define $v:=u_1+u_2$, so that $v^+=u_1$ and $v^-=u_2$. Since $u$ is critical point, $I'(u)=0$, from which (here we use the notation $\phi_3:=\phi_{u_3}$)
    \begin{align*}
        I'(u)[u_1]&=I'(u)[v^+]=I'(v)[v^+]+\lambda\int_{\mathbb R^3}\phi_3(v^+)^2dx=0,\\
        I'(u)[u_2]&=I'(u)[v^-]=I'(v)[v^-]+\lambda\int_{\mathbb R^3}\phi_3(v^-)^2dx=0,\\
        I'(u)[u_3]&=0,
    \end{align*}
    and hence, in particular,
    \begin{equation}\label{negativesides}
    \gamma_+(v)=-\int_{\mathbb R^3}\phi_3u_1^2dx<0\quad\text{and}\quad\gamma_-(v)=-\int_{\mathbb R^3}\phi_3u_2^2dx<0.
    \end{equation}
    Let $\delta>0$ as in \eqref{eq:delta} and let $\zeta>0$ be such that $\zeta\norm{v}\leq\delta$. Then for every $t\in[\zeta,1]$
    $$\gamma_+(\zeta v^++tv^-)\geq\frac{\zeta^2}{4}\norm{v^+}^2>0,$$
    and, by \eqref{negativesides},
    \begin{align*}
        \gamma_+(v^++tv^-)&=I'(v^+)[v^+]+t^2\lambda\int_{\mathbb R^3}\phi_{v^-}(v^+)^2dx\\
        &\leq I'(v^+)[v^+]+\lambda\int_{\mathbb R^3}\phi_{v^-}(v^+)^2dx=\gamma_+(v)<0.
    \end{align*}
    Similarly, for every $s\in[\zeta,1]$ %using Remark \ref{lowerboundgamma}
    $$\gamma_-(sv^++\zeta v^-)\geq\frac{\zeta^2}{4}\norm{v^-}^2>0,$$
    and using again \eqref{negativesides}
    \begin{align*}
        \gamma_-(sv^++v^-)&=I'(v^-)[v^-]+s^2\lambda\int_{\mathbb R^3}\phi_{v^-}(v^+)^2dx\\
        &\leq I'(v^-)[v^-]+\lambda\int_{\mathbb R^3}\phi_{v^-}(v^+)^2dx=\gamma_-(v)<0.
    \end{align*}
    Summarizing, Miranda's Theorem can be applied to the function $\Psi$ defined on $[\zeta,1]\times[\zeta,1]$ by
    $$\Psi(s,t):=\left(\gamma_+(sv^++tv^-),\gamma_-(sv^++tv^-)\right),$$
    so there is a point $(\alpha,\beta)\in[\zeta,1]\times[\zeta,1]$ such that
    $$\gamma_+(\alpha v^++\beta v^-)=\gamma_-(\alpha v^++\beta v^-)=0,$$
    i.e., $\alpha v^++\beta v^-\in\mathcal M$. On the other hand, the assumptions in Remark \ref{domination2} are satisfied, so that
    $$I(\alpha v^++\beta v^-)=I(\alpha u_1+\beta u_2)<I(u_1+u_2+u_3)=I(u)=c=\inf_{\mathcal M}I,$$
    a contradiction. Therefore $E$ has exactly two connected components, and this completely proves Theorem
    \ref{theorem}.
%\end{proof}

\section*{Appendix}

In this appendix we prove Proposition  \ref{prop:M}, however some preliminaries are in order.
In all that follows,   $\mathfrak u$ denotes
the solution obtained in Lemma \ref{le:facile}
 in correspondence of a small fixed value of $\lambda.$ 
% We set also for short $\phi_{\mathfrak u} =\phi_{ \mathfrak u_{\lambda}}.$

Let us start by observing that  \eqref{eq:disug} reads as
$$\norm{\mathfrak u}^2=\int_{\mathbb R^3}(f(\mathfrak u)\mathfrak u-\lambda\phi_{\mathfrak u}\mathfrak  u^2)dx<\int_{\mathbb R^3}(\mathfrak u^4-\lambda\phi_{\mathfrak u} \mathfrak u^2)dx,$$
so that
%\color{blue}
\begin{equation}\label{eq:possible}
0<\int_{\mathbb R^3}(\mathfrak u^2-\lambda\phi_{\mathfrak u})\mathfrak u^2dx.
\end{equation}
\color{black}
\begin{comment}
Note that \eqref{eq:disug} also can be written as
$$0<\frac{1}{2}\norm{u_0}^2\leq\int_{\mathbb R^3}(u_0^2-\lambda\phi_0)u_0^2dx-\frac{1}{2C_2}\int_{\mathbb R^3}u_0^2dx=\int_{\mathbb R^3}\left(u_0^2-\lambda\phi_0-\frac{1}{2C_2}\right)u_0^2dx,$$
where $C_2>0$ is an associated constant to the embedding $H^1(\mathbb R^3) \hookrightarrow L^2(\mathbb R^3)$.
\end{comment}

%If $\mathfrak u$ is the solution given in  Lemma \ref{le:facile}, 
In the following we will always adopt the convention that, for a radial function $z:\mathbb R^{3}\to \mathbb R$,
we use the same notation to denote the function $r\in [0,+\infty) \mapsto z(r)\in \mathbb R$ where $r=|x|$.
It will be clear from the context if we mean $z(x)$ or $z(r)$. 
As usual we will denote always by $\phi_{z}$ the solution of
$-\Delta\phi=z^{2}$ in $\mathbb R^{3}.$
In particular the above convention  applies to $\mathfrak u$
and $\phi_{\mathfrak u}$. 

By the regularity %(and in consequence by the continuity) 
of $\mathfrak u$ and $\phi_\mathfrak u$, there are $r_1,r_4$ real numbers with $0<r_1<r_4$ such that
%\color{blue}
\begin{equation*}%\label{interval}
    \mathfrak u^2-\lambda\phi_{\mathfrak u}>0\quad\text{ on }[r_1,r_4].
\end{equation*}
\color{black}
% whenever we write
%$z(x)$, we intend $x\in \mathbb R^{3}$; whenever we write (by using the same symbol) $z(r)$, we are meaning $r=|x|$. 
%\color{red}
%\color{black}

Given $0<s<\rho$, we use the notation $A_{s,\rho}$ for the annulus $B_\rho\setminus\overline{B}_s$. 
Here  $B_{a}$ denotes the ball centred in zero with radius $a>0$ and $\overline B_{a}$ its closure.

\medskip

Let $\delta >0$ be such that 
%\color{blue}
$$\int_{A_{r_1,r_4}}(\mathfrak u^2-\lambda\phi_{\mathfrak u})\mathfrak u^2dx>\frac{3}{2} \delta,$$
which is possible by \eqref{eq:possible}. We can find numbers $r_2,r_3>0$ with $r_1<r_2<r_3<r_4$ such that
\begin{equation}\label{condition}
    \int_{A_{r_2,r_3}}(\mathfrak u^2-\lambda\phi_{\mathfrak u})\mathfrak u^2dx>\delta\quad\text{and}\quad\int_{A_{r_i,r_{i+1}}}(\mathfrak u^2+\lambda\phi_{\mathfrak u})\mathfrak u^2dx<\frac{\delta}{4},\quad\text{for }i=1,3.
\end{equation}
\color{black}
Let $\nu, \eta \in C^{\infty} (\mathbb R^3;[0,1])$ be radial cut-off functions satisfying: 
\begin{itemize}
\item %$0\leq \nu \leq 1$,\quad
$\nu =0$ in $B_{r_1}$, $\nu$ strictly increasing (in the radial coordinate) in $A_{r_1,r_2}$,
 with $\nu=1$ outside $B_{r_2}$. 
\item %$0\leq \eta \leq 1$,\quad 
$\eta =1$ in $B_{r_3}$, $\eta$ strictly decreasing (in the radial coordinate) in $A_{r_3,r_4}$, with $\eta=0$ outside $B_{r_4}$.
\end{itemize}
%\color{black}
\begin{comment}
\[
\mathfrak u^2(r)-\lambda\phi_{\mathfrak u}(r)-\frac{1}{2C_2}>0,\text{ for }r_1\leq r\leq r_4.
\]
Given $0<r<\rho$, we use the notation $A_{r,\rho}$ for the annulus $B_\rho\setminus\overline{B}_r$. Let $\delta >0$ be such that $$\int_{A_{r_1,r_4}}\left(\mathfrak u^2-\lambda \phi_{\mathfrak u}-\frac{1}{2C_2}\right) \mathfrak u^2\, dx>\frac{3}{2} \delta.$$
We can find numbers $r_2,r_3>0$ such that $r_1<r_2<r_3<r_4$ and
\[
    \int_{A_{r_2,r_3}}\left(\mathfrak u^2-\lambda \phi _{\mathfrak u}-\frac{1}{2C_2}\right)\mathfrak u^2\, dx> \delta,\qquad \int_{A_{r_i,r_{i+1}}}\left(\mathfrak u^2+\lambda \phi_{\mathfrak u}+\frac{1}{2C_2}\right)\mathfrak u^2\, dx< \frac{\delta}{4}, \text{ for $i=1,3$}
\]
Let $\nu, \eta \in C^{\infty} (\mathbb R^3)$ be radial cut-off functions with 
\begin{itemize}
\item $0\leq \nu \leq 1$,\quad$\nu =0$ in $B_{r_1}$,\quad $\nu$ strictly increasing in $A_{r_1,r_2}$,\quad $\nu=1$ outside $B_{r_2}$,

\vspace{3mm}

\item $0\leq \eta \leq 1$,\quad $\eta =1$ in $B_{r_3}$,\quad $\eta$ strictly decreasing in $A_{r_3,r_4}$,\quad $\eta=0$ outside $B_{r_4}$.
\end{itemize}
\end{comment}
For brevity we define the functions
%\color{blue}
 \begin{equation}\label{function}
    \mathfrak v:= \nu\mathfrak u\eta \ \text{ and } \  e_t:=t \mathfrak v,\quad \text{for}\quad t>1.
 \end{equation}
%\color{black}
 We also define the functional $G:H^1_{rad}(\mathbb R^3) \rightarrow \mathbb R$ by
 $$G(u):=\int_{\mathbb R^3} (\abs{\nabla u}^2+2u^2)dx +\lambda\int_{\mathbb R^3} \phi_uu^2dx- \int_{\mathbb R^3}f(u)udx.$$
 
\begin{lem}\label{lem:T0}
With the above notations, there exists $T_{1}>1$ such that 
$$\forall t\geq T_{1} : \ G(e_{t})<0.$$
\end{lem}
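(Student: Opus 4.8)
The plan is to prove the sharper statement that $G(e_t)/t^4$ admits a strictly negative limit as $t\to\infty$; since $t^4>0$, this immediately produces the required $T_1>1$ with $G(e_t)<0$ for all $t\geq T_1$. The starting point is to exploit the homogeneities. Writing $e_t=t\mathfrak v$ and using $\phi_{t\mathfrak v}=t^2\phi_{\mathfrak v}$,
\[
\frac{G(e_t)}{t^4}=\frac1{t^2}\int_{\mathbb R^3}\big(|\nabla\mathfrak v|^2+2\mathfrak v^2\big)\,dx+\lambda\int_{\mathbb R^3}\phi_{\mathfrak v}\mathfrak v^2\,dx-\frac1{t^4}\int_{\mathbb R^3}f(t\mathfrak v)\,t\mathfrak v\,dx .
\]
The first term clearly tends to $0$. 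For the last term, the decisive feature is that $\mathfrak v=\nu\mathfrak u\eta$ is bounded and has compact support contained in $\overline{A}_{r_1,r_4}$: on $\{\mathfrak v>0\}$ one has $f(t\mathfrak v)t\mathfrak v/t^4=\mathfrak v^4\,f(t\mathfrak v)/(t\mathfrak v)^3\to\mathfrak v^4$ pointwise by \eqref{f_{4}}, and the integrand is dominated, for $t\ge1$, by an $L^1$ function supported on a set of finite measure (namely $\mathfrak v^4+C$, using that by \eqref{f_{4}} and the continuity of $f$ the map $s\mapsto f(s)s$ is bounded below on $[0,\infty)$, while $f(s)s\le s^4$). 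Dominated convergence then gives $t^{-4}\int f(t\mathfrak v)t\mathfrak v\,dx\to\int\mathfrak v^4\,dx$, hence
\[
\lim_{t\to\infty}\frac{G(e_t)}{t^4}=\lambda\int_{\mathbb R^3}\phi_{\mathfrak v}\mathfrak v^2\,dx-\int_{\mathbb R^3}\mathfrak v^4\,dx=-\int_{\mathbb R^3}\big(\mathfrak v^2-\lambda\phi_{\mathfrak v}\big)\mathfrak v^2\,dx .
\]

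It then remains to check that $\int_{\mathbb R^3}(\mathfrak v^2-\lambda\phi_{\mathfrak v})\mathfrak v^2\,dx>0$, and this is precisely where the construction of the radii $r_1<r_2<r_3<r_4$, the cut-offs $\nu,\eta$ and the inequalities \eqref{condition} enter. Two elementary facts are used: since $0\le\nu\eta\le1$ one has $0\le\mathfrak v\le\mathfrak u$, and since $\mathfrak v^2\le\mathfrak u^2$ the corresponding Newtonian potentials satisfy $0\le\phi_{\mathfrak v}\le\phi_{\mathfrak u}$ pointwise (each is the convolution of its source with the positive kernel $1/(4\pi|\cdot|)$). Because $\mathfrak v\equiv\mathfrak u$ on $A_{r_2,r_3}$, there
\[
(\mathfrak v^2-\lambda\phi_{\mathfrak v})\mathfrak v^2=(\mathfrak u^2-\lambda\phi_{\mathfrak v})\mathfrak u^2\ge(\mathfrak u^2-\lambda\phi_{\mathfrak u})\mathfrak u^2,
\]
so $\int_{A_{r_2,r_3}}(\mathfrak v^2-\lambda\phi_{\mathfrak v})\mathfrak v^2\,dx>\delta$ by \eqref{condition}. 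On each transition annulus $A_{r_i,r_{i+1}}$, $i=1,3$, one estimates crudely
\[
(\mathfrak v^2-\lambda\phi_{\mathfrak v})\mathfrak v^2\ge-(\mathfrak v^2+\lambda\phi_{\mathfrak v})\mathfrak v^2\ge-(\mathfrak u^2+\lambda\phi_{\mathfrak u})\mathfrak u^2,
\]
whence $\int_{A_{r_i,r_{i+1}}}(\mathfrak v^2-\lambda\phi_{\mathfrak v})\mathfrak v^2\,dx>-\delta/4$, again by \eqref{condition}. Since $\mathfrak v$ vanishes on $B_{r_1}$ and outside $B_{r_4}$, adding the three contributions gives $\int_{\mathbb R^3}(\mathfrak v^2-\lambda\phi_{\mathfrak v})\mathfrak v^2\,dx>\delta-\delta/4-\delta/4=\delta/2>0$.

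Consequently $\lim_{t\to\infty}G(e_t)/t^4\le-\delta/2<0$, and so there exists $T_1>1$ such that $G(e_t)/t^4<0$, i.e.\ $G(e_t)<0$, for every $t\ge T_1$. I expect the only genuinely delicate step to be the passage to the limit in the nonlinear term: the asymptotically cubic growth \eqref{f_{4}} makes $\int f(t\mathfrak v)t\mathfrak v\,dx$ of the same order $t^4$ as $\int\phi_{t\mathfrak v}(t\mathfrak v)^2\,dx$ rather than of higher order, so the sign of the limit is governed by the competition between $\int\mathfrak v^4\,dx$ and $\lambda\int\phi_{\mathfrak v}\mathfrak v^2\,dx$; controlling this comparison is possible only because $\mathfrak u$ has been truncated to a set of finite measure, which is exactly the purpose of introducing $\mathfrak v=\nu\mathfrak u\eta$ and of the sizing conditions \eqref{condition}.
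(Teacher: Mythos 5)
Your proposal is correct and follows essentially the same route as the paper: divide $G(e_t)$ by $t^4$, pass to the limit in the nonlinear term using \eqref{f_{4}} (the paper uses Fatou's lemma to bound the $\limsup$, while you use dominated convergence to get the exact limit, with a valid domination argument), and then estimate $\int(\mathfrak v^2-\lambda\phi_{\mathfrak v})\mathfrak v^2\,dx$ from below by splitting over the three annuli and invoking \eqref{condition} together with $\mathfrak v\le\mathfrak u$ and $\phi_{\mathfrak v}\le\phi_{\mathfrak u}$. The argument is sound and matches the paper's proof in all essential respects.
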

\begin{proof}
%\color{blue}
Computing we have, for $t>1$,
\begin{align*}
    G(e_t)&= t^2\left(\norm{\mathfrak v}^2+\int_{\mathbb R^3}\mathfrak v^2dx\right)+\lambda t^4\int_{\mathbb R^3}\phi_{\mathfrak v}\mathfrak v^2 dx 
     -\int_{\mathbb R^3}f(t\mathfrak v)t\mathfrak vdx
\end{align*}
so that
$$\frac{G(e_t)}{t^4}<\frac{1}{t^2}\left(\norm{\mathfrak u}^2+\int_{\mathbb R^3}\mathfrak u^2dx\right)+\lambda\int _{\mathbb R^3}\phi_{\mathfrak v}\mathfrak v^2 dx-\int_{\mathbb R^3}\frac{f(t\mathfrak v)}{t^3}\mathfrak vdx,$$
from which
\begin{align*}
    \limsup _{t\rightarrow \infty} \frac{G(e_t)}{t^4} & 
    \leq\lambda\int _{\mathbb R^3}\phi_{\mathfrak v}\mathfrak v^2 dx +\limsup_{t\rightarrow \infty}\left[-\int_{\mathbb R^3}\frac{f(t\mathfrak v)}{t^3}\mathfrak vdx\right]\\
    &=\lambda\int _{\mathbb R^3}\phi_{\mathfrak v}\mathfrak v^2 dx-\liminf_{t\rightarrow \infty} \int_{\mathbb R^3}\frac{f(t\mathfrak v)}{t^3}\mathfrak vdx\\
    &\leq\lambda\int _{\mathbb R^3}\phi_{\mathfrak v}\mathfrak v^2 dx-\int_{A_{r_1,r_4}}\left[\liminf_{t\rightarrow\infty}\frac{f(t\mathfrak v)}{(t\mathfrak v)^3}\right]\mathfrak v^4dx\\
    &=\int_{A_{r_1,r_4}}\left(\lambda\phi_{\mathfrak v}-\mathfrak v^2\right)\mathfrak v^2 dx.
\end{align*}
The last inequality above follows from Fatou's lemma. Therefore
\begin{align*}
    \limsup _{t\rightarrow \infty} \frac{G(e_t)}{t^4}&\leq\int_{A_{r_1,r_2}} \left(\lambda\phi_{\nu\mathfrak u}-\nu^2\mathfrak u^2\right)\nu^2\mathfrak u^2 dx+ \int_{A_{r_2,r_3}}\left(\lambda\phi_{\mathfrak u}-\mathfrak u^2\right)\mathfrak u^2dx\\
    &\ \ +\int_{A_{r_3,r_4}}\left(\lambda\phi_{\mathfrak u\eta}-\mathfrak u^2\eta^2\right)\mathfrak u^2\eta^2 dx\\
    &=: I_1+I_2+I_3,
\end{align*}
Let us estimate every integral. By \eqref{condition} we have
$$
%\begin{align*}
I_{1}\leq \int_{A_{r_1,r_2}}\abs{ \lambda \phi_{\nu \mathfrak u}-\nu ^2 \mathfrak u^2} \nu ^2 \mathfrak u^2 dx
\leq \int_{A_{r_1,r_2}}(\mathfrak u^2+\lambda \phi _{\mathfrak u})\mathfrak u^2dx
 < \frac{\delta}{4}.
%\end{align*}
$$
Similarly
\[
I_{3}
%\biggl| \int_{A_{r_3,r_4}}(\lambda \phi_{\mathfrak v\eta}+1-\mathfrak v^2 \eta ^2) \mathfrak v^2 \eta ^2 dx \biggr|
< \frac{\delta}{4},
\]
and, again by \eqref{condition},
$$I_{2}=\int_{A_{r_{2}, r_{3}}} (\lambda \phi_{\mathfrak u} -\mathfrak u^2)\mathfrak u^2 dx<-\delta.$$
Then
\[
\limsup _{t\rightarrow \infty} \frac{G(e_t)}{t^4} < \frac{\delta}{4} -\delta +\frac{\delta}{4}=-\frac{\delta}{2},
\]
from which the conclusion follows.
\end{proof}
%\color{blue}
For $t>0$, define the functional $H_t:H^1_{rad}(\mathbb R^3) \rightarrow \mathbb R$ by
$$H_t(u):=t\int_{\mathbb R^3}\abs{\nabla u}^2dx+\left(\frac{1}{t}+t^2\right)\int_{\mathbb R^3}u^2dx+\frac{\lambda}{t}\int_{\mathbb R^3}\phi_uu^2dx-\frac{1}{t^2}\int_{\mathbb R^3}f(tu)udx.$$
\begin{lem}\label{lem:T2}
With the notations in \eqref{function}, there exists $T_{2}>1$ such that
$$\forall t\geq T_{2}:\ H_{t}(e_t)<0.$$
\end{lem}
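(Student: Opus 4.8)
The plan is to follow the proof of Lemma~\ref{lem:T0}; in fact the present situation is easier, since the nonlinear term by itself will produce negativity, with no competition from the nonlocal term. First I would substitute $e_{t}=t\mathfrak v$ into $H_{t}$ and collect powers of $t$, using $\phi_{t\mathfrak v}=t^{2}\phi_{\mathfrak v}$; this gives, for $t>1$,
$$H_{t}(e_{t})=t^{3}\int_{\mathbb R^{3}}|\nabla\mathfrak v|^{2}dx+(t+t^{4})\int_{\mathbb R^{3}}\mathfrak v^{2}dx+\lambda t^{3}\int_{\mathbb R^{3}}\phi_{\mathfrak v}\mathfrak v^{2}dx-\frac{1}{t}\int_{\mathbb R^{3}}f(t^{2}\mathfrak v)\mathfrak v\,dx.$$
The point is that the argument of $f$ is now $t^{2}\mathfrak v$ (rather than $t\mathfrak v$, as was the case for $G(e_{t})$), so by \eqref{f_{4}} one has $f(t^{2}\mathfrak v)\sim(t^{2}\mathfrak v)^{3}$ on $\{\mathfrak v>0\}$ and the last term behaves like $-t^{5}\int\mathfrak v^{4}$; this dominates the $O(t^{4})$ quadratic term and the $O(t^{3})$ gradient and nonlocal terms. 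I would therefore divide by $t^{5}$ and study the limit as $t\to\infty$.

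In $H_{t}(e_{t})/t^{5}$ the first three terms are $O(t^{-2})$, $O(t^{-1})$ and $O(t^{-2})$ respectively, hence vanish. For the nonlinear term, observe that $\{\mathfrak v>0\}$ equals the annulus $A_{r_{1},r_{4}}$ (recall $\mathfrak v=\nu\mathfrak u\eta$ and the monotonicity of $\nu$, $\eta$), and on that set
$$\frac{1}{t^{6}}f\bigl(t^{2}\mathfrak v(x)\bigr)\,\mathfrak v(x)=\frac{f\bigl(t^{2}\mathfrak v(x)\bigr)}{\bigl(t^{2}\mathfrak v(x)\bigr)^{3}}\,\mathfrak v(x)^{4}\longrightarrow\mathfrak v(x)^{4}\qquad\text{as }t\to\infty$$
by \eqref{f_{4}}, while off $\supp\mathfrak v$ the integrand is identically $0$. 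Using the bound $|f(s)|\le C(|s|+|s|^{3})$ for all $s\in\mathbb R$ (a consequence of \eqref{f_{1}}, \eqref{f_{3}}, \eqref{f_{4}}), for $t\ge1$ these integrands are dominated in absolute value by the fixed function $C(\mathfrak v^{2}+\mathfrak v^{4})\in L^{1}(\mathbb R^{3})$, so dominated convergence yields
$$\frac{1}{t^{6}}\int_{\mathbb R^{3}}f(t^{2}\mathfrak v)\mathfrak v\,dx\longrightarrow\int_{\mathbb R^{3}}\mathfrak v^{4}dx\ \ge\ \int_{A_{r_{2},r_{3}}}\mathfrak u^{4}dx>0,$$
the last inequality because $\mathfrak v\equiv\mathfrak u$ on $A_{r_{2},r_{3}}$ and $\mathfrak u>0$. (Alternatively, Fatou's lemma applied to the eventually nonnegative, uniformly $L^{1}$-minorized integrand gives the same lower bound on the $\liminf$, exactly as in Lemma~\ref{lem:T0}.)

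Putting these together,
$$\limsup_{t\to\infty}\frac{H_{t}(e_{t})}{t^{5}}\ \le\ -\int_{\mathbb R^{3}}\mathfrak v^{4}dx\ <\ 0,$$
so $H_{t}(e_{t})<0$ for all sufficiently large $t$, which, after possibly enlarging the threshold, gives the desired $T_{2}>1$. There is no substantial obstacle beyond the passage to the limit in the nonlinear term, and there the only thing to be careful about is that $\mathfrak v$ is strictly positive on a set of positive measure, so that $t^{2}\mathfrak v\to\infty$ there and \eqref{f_{4}} applies; this is precisely guaranteed by the construction of the cut-offs $\nu$ and $\eta$, which leave $\mathfrak v$ equal to $\mathfrak u$ on $A_{r_{2},r_{3}}$.
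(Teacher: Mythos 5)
Your proposal is correct and follows essentially the same route as the paper: substitute $e_t=t\mathfrak v$, divide by $t^5$, note the first three terms vanish in the limit, and identify the limit of $t^{-6}\int f(t^2\mathfrak v)\mathfrak v$ as $\int\mathfrak v^4>0$ via \eqref{f_{4}}. The only cosmetic difference is that you invoke dominated convergence (with the bound $|f(s)|\le C(|s|+|s|^3)$, which does follow from \eqref{f_{1}}, \eqref{f_{3}}, \eqref{f_{4}}), whereas the paper uses Fatou's lemma for the one-sided inequality -- an alternative you yourself mention.
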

\begin{proof}
    Note that
    \begin{align*}
        H_t(e_t)&=t\int_{\mathbb R^3}\abs{\nabla e_t}^2dx+\left(\frac{1}{t}+t^2\right)\int_{\mathbb R^3}e_t^2dx+\frac{\lambda}{t}\int_{\mathbb R^3}\phi_{e_t}e_t^2dx-\frac{1}{t^2}\int_{\mathbb R^3}f(te_t)e_tdx\\
        &=t^3\int_{\mathbb R^3}\abs{\nabla \mathfrak v}^2dx+\left(t+t^4\right)\int_{\mathbb R^3}\mathfrak v^2dx+t^3\lambda\int_{\mathbb R^3}\phi_{\mathfrak v}\mathfrak v^2dx-\frac{1}{t}\int_{\mathbb R^3}f(t^2\mathfrak v)\mathfrak vdx,
    \end{align*}
    thus
    $$\frac{H_t(e_t)}{t^5}=\frac{1}{t^2}\int_{\mathbb R^3}\abs{\nabla \mathfrak v}^2dx+\left(\frac{1}{t^4}+\frac{1}{t}\right)\int_{\mathbb R^3}\mathfrak v^2dx+\frac{\lambda}{t^2}\int_{\mathbb R^3}\phi_{\mathfrak v}\mathfrak v^2dx-\int_{\mathbb R^3}\frac{f(t^2\mathfrak v)}{t^6}\mathfrak vdx,$$
    and then
    \begin{align*}
        \limsup _{t\rightarrow \infty}\frac{H_t(e_t)}{t^5}&=\limsup _{t\rightarrow \infty}\left[-\int_{\mathbb R^3}\frac{f(t^2\mathfrak v)}{t^6}\mathfrak vdx\right]\\
        &=-\liminf_{t\rightarrow \infty}\int_{\mathbb R^3}\frac{f(t^2\mathfrak v)}{t^6}\mathfrak vdx\\
        &\leq-\int_{A_{r_1,r_4}}\liminf_{t\rightarrow \infty}\left[\frac{f(t^2\mathfrak v)}{(t^2\mathfrak v)^3}\right]\mathfrak v^4dx\\
        &=-\int_{A_{r_1,r_4}}\mathfrak v^4dx,
    \end{align*}
    where the last inequality follows from Fatou's lemma. Therefore we conclude.
\end{proof}
We can prove now  that $\mathcal M$ is nonempty.
\subsection{Proof of Proposition \ref{prop:M}}
    Under the above notations,
    consider the element 
    $$u:=T_{0}\nu \mathfrak u\eta\in H^1_{rad}(\mathbb R^3),$$ % with $\eta$, $\nu$ and $\mathfrak v$ as in \eqref{function}. 
    %\color{blue}
    where $T_{0}>1$ 
     is  chosen  such that
    \begin{equation}\label{R}
    T_{0}\geq\max\{T_1,T_2\},\quad\left[\frac{r_1}{T_{0}},\frac{r_4}{T_{0}}\right]\cap[r_1,r_4]=\emptyset,\quad\lambda\phi_u(T_{0}r_1)<1,
    %\quad\text{and}\quad1<T_0\leq R,
    \end{equation}
    with $T_{1}, T_{2}$ given in  Lemmas \ref{lem:T0} and \ref{lem:T2}.
    Note that $\supp(u)=A_{r_1,r_4}$. %As usual, we are using the notation $\phi_{u}(r) = \phi_{u}(x), r=|x|$,     being $\phi_{u}$ a radial function. 
    It will be useful the rescaled function $$w(x):=u(T_{0}x).$$
    Before to proceed with the proof, let us show other  preliminary facts.
    
  First note that,
    $$\int_{\mathbb R^3}\abs{\nabla w}^2dx=\frac{1}{T_{0}}\int_{\mathbb R^3}\abs{\nabla u}^2dx,\quad \int_{\mathbb R^3}w^2dx=\frac{1}{T_{0}^3}\int_{\mathbb R^3}u^2dx,$$
    that the nonlinear terms are
    $$\int_{\mathbb R^3}F(w)dx=\frac{1}{T_{0}^3}\int_{\mathbb R^3}F(u)dx,\quad \int_{\mathbb R^3}f(w)wdx=\frac{1}{T_{0}^3}\int_{\mathbb R^3}f(u)udx,$$
    and that the nonlocal term can be written as
    $$\int_{\mathbb R^3}\phi_ww^2dx=\frac{1}{T_{0}^5}\int_{\mathbb R^3}\phi_uu^2dx\quad\text{ since }\quad\phi_w(x)=\frac{1}{T_{0}^2}\phi_u(T_{0}x).$$
  
    Moreover, recalling that $\phi_{u}$ is decreasing and \eqref{R}, we get
    \begin{eqnarray}\label{Rr}
            \lambda\int_{\mathbb R^3}\phi_wu^2dx&=& 4\pi \lambda\int_0^\infty\phi_w(r)u^2(r)r^2dr=\frac{4\pi}{T_{0}^2}\int_{r_1}^{r_4}\lambda\phi_u(T_{0}r)u^2(r)r^2dr \nonumber \\
            &<&\frac{4\pi}{T_{0}^2}\int_{r_1}^{r_4}u^2(r)r^2dr=\frac{1}{T_{0}^2}\int_{\mathbb R^3}u^2dx.
    \end{eqnarray}
%above it is sufficient to choose $A_{0}$ s.t. $$\left( \lambda r_{4} \int_{r_{1}}^{r_{4}} u^{2}(r) dr\right)<A_{0}^{2}$$I %mean, this condition should replace (0.7), or equivalently, we need to put a $r_{4}$ in (0.7).
    Defining $v:=u-w$, we have $\supp(u)\cap\supp(w)=\emptyset$ by \eqref{R}, so that $v^+=u$ and $v^-=-w$. 
    %The aim now is to show that, by using the Miranda's Theorem and suitable numbers $T_{0}$ and $t_{0}$, the set $\mathcal M$ is not empty.
   %\medskip 
%    \[
%        \begin{split}
%            I'(\theta v^++t_0v^-)[t_0v^-]&=t_0^2\int_{\mathbb R^3}\left[\abs{\nabla v^-}^2+(v^-)^2\right]dx+t_0^4\lambda\int_{\mathbb R^3}\phi_{v^-}(v^-)^2dx\\&+t_0^2\theta^2\lambda\int_{\mathbb R^3}\phi_{v^+}(v^-)^2dx-\int_{\mathbb R^3}f(t_0v^-)t_0v^-dx,
%        \end{split}
%    \]
%    so that
%    \begin{align*}
%        I'(\theta v^++t_0v^-)[t_0v^-]&>t_0^2\int_{\mathbb R^3}\left[\abs{\nabla w}^2+w^2\right]dx-\int_{\mathbb R^3}f(t_0w)t_0wdx\\
%        &>\frac{t_0^2}{R^3}\left[\norm{u}^2-\int_{\mathbb R^3}\frac{f(t_0u)}{t_0}udx\right]>0.
%    \end{align*}

\medskip

    Note that      for every $\tau\in(0,T_{0}]$,
    \begin{align*}
        \begin{split}
            I'(T_{0}v^++\tau v^-)[T_{0}v^+]=&\ T_{0}^2\int_{\mathbb R^3}\left[\abs{\nabla v^+}^2+(v^+)^2\right]dx+T_{0}^4\lambda\int_{\mathbb R^3}\phi_{v^+}(v^+)^2dx\\&+T_{0}^2\tau^2\lambda\int_{\mathbb R^3}\phi_{v^-}(v^+)^2dx-\int_{\mathbb R^3}f(T_{0}v^+)T_{0}v^+dx
        \end{split}\\
        \begin{split}
            \leq&\ T_{0}^2\int_{\mathbb R^3}\left[\abs{\nabla u}^2+u^2\right]dx+T_{0}^4\lambda\int_{\mathbb R^3}\phi_uu^2dx\\&+T_{0}^4\lambda\int_{\mathbb R^3}\phi_wu^2dx-\int_{\mathbb R^3}f(T_{0}u)T_{0}udx,
        \end{split}
    \end{align*}
    %\color{blue}
    from which
    \[
        I'(T_{0}v^++\tau v^-)[T_{0}v^+]\leq\norm{T_{0}u}^2+\lambda\int_{\mathbb R^3}\phi_{T_{0}u}(T_{0}u)^2dx-\int_{\mathbb R^3}f(T_{0}u)T_{0}udx+T_{0}^4\lambda\int_{\mathbb R^3}\phi_wu^2dx.
    \]
%    Since by \eqref{R}
   % \begin{equation*}%\label{mixed}
      %  \begin{split}
         %   \lambda\int_{\mathbb R^3}\phi_wu^2dx&= 4\pi \lambda\int_0^\infty\phi_w(r)u^2(r)r^2dr=\frac{4\pi}{R^2}\int_{r_1}^{r_4}\lambda\phi_u(Rr)u^2(r)r^2dr\\
 %           &<\frac{4\pi}{R^2}\int_{r_1}^{r_4}u^2(r)r^2dr=\frac{1}{R^2}\int_{\mathbb R^3}u^2dx,
   %     \end{split}
   % \end{equation*}
   Then we have by \eqref{Rr} and by recalling the notation in \eqref{function} (concretely that  $e_{T_{0}^{2}} = T_{0}u$) %Lemma \ref{lem:T0}, %\eqref{get}
    \begin{eqnarray}\label{eq:terza}
        I'(T_{0}v^++\tau v^-)[T_{0}v^+]&\leq& \norm{e_{T_{0}^{2}}}^2+\lambda\int_{\mathbb R^3}\phi_{e_{T_{0}^{2}}}e_{T_{0}^{2}}^2dx-\int_{\mathbb R^3}f(e_{T_{0}^{2}})
        e_{T_{0}^{2}}dx+\int_{\mathbb R^3}e_{T_{0}^{2}}^2dx\nonumber\\
 %&\leq&\norm{e_{T_{0}^{2}}}^2+\lambda\int_{\mathbb R^3}\phi_{e_{T_{0}^{2}}}e_{T_{0}^{2}}^2dx-\int_{\mathbb R^3}f(e_{T_{0}^{2}})e_{T_{0}^{2}}dx+\int_{\mathbb R^3}e_{T_{0}%^{2}}^2dx\nonumber\\
        &=&G(e_{T_{0}^{2}})<0,
    \end{eqnarray}
    \color{black}
    by Lemma \ref{lem:T0}. Similarly, for every $\theta\in(0,T_{0}]$,
        \begin{eqnarray*}
            I'(\theta v^++T_{0}v^-)[T_{0}v^-]&=&T_{0}^2\int_{\mathbb R^3}\left[\abs{\nabla v^-}^2+(v^-)^2\right]dx+T_{0}^4\lambda\int_{\mathbb R^3}\phi_{v^-}(v^-)^2dx\\
            &&+T_{0}^2\theta^2\lambda\int_{\mathbb R^3}\phi_{v^+}(v^-)^2dx-\int_{\mathbb R^3}f(T_{0}v^-)T_{0}v^-dx \\
            &\leq& T_{0}^2\int_{\mathbb R^3}\left[\abs{\nabla w}^2+w^2\right]dx+T_{0}^4\lambda\int_{\mathbb R^3}\phi_ww^2dx\\
            &&+T_{0}^4\lambda\int_{\mathbb R^3}\phi_uw^2dx-\int_{\mathbb R^3}f(T_{0}w)T_{0}wdx\\
            &=& T_{0}\int_{\mathbb R^3}\abs{\nabla u}^2dx+\frac{1}{T_{0}}\int_{\mathbb R^3}u^2dx+\frac{\lambda}{T_{0}}\int_{\mathbb R^3}\phi_uu^2dx\\
            &&+T_{0}^4\lambda\int_{\mathbb R^3}\phi_uw^2dx-\frac{1}{T_{0}^3}\int_{\mathbb R^3}f(T_{0}u)T_{0}udx \\
            &\leq& T_{0}\int_{\mathbb R^3}\abs{\nabla u}^2dx+\frac{1}{T_{0}}\int_{\mathbb R^3}u^2dx+\frac{\lambda}{T_{0}}\int_{\mathbb R^3}\phi_uu^2dx\\
            &&+ T_{0}^2\int_{\mathbb R^3}u^2dx-\frac{1}{T_{0}^{3}}\int_{\mathbb R^3}f(T_{0}u)T_{0}udx\\ 
            & = &H_{T_{0}}(u),
        \end{eqnarray*}
 where the last inequality holds true because of \eqref{Rr}. Then, by Lemma \ref{lem:T2}, for every $\theta\in(0,T_0]$
%    \begin{align*}
%        I'(\theta v^++T_0v^-)[T_0v^-]&\leq T_0\int_{\mathbb R^3}\abs{\nabla u}^2dx+\frac{1}{T_0}\int_{\mathbb R^3}u^2dx+\frac{\lambda}{T_0}\int_{\mathbb R^3}\phi_uu^2dx\\&+T_0^2\int_{\mathbb R^3}u^2dx-\frac{1}{T_0^2}\int_{\mathbb R^3}f(T_0u)udx.
%    \end{align*}
%     Therefore, by Lemma \ref{lem:T2} 
we get:
    \begin{equation}\label{eq:quarta}
    I'(\theta v^++T_0v^-)[T_0v^-]\leq H_{T_0}(u)=H_{T_0}(e_{T_0})<0.
    \end{equation}
    On the other hand, it is clear that
    $$\int_{\mathbb R^3}f(tv^+)tv^+dx=t^2\int_{\supp (u)}\frac{f(tu)}{tu}u^2dx.$$
    By \eqref{f_{3}} and \eqref{f_{4}} it holds
    $$\lim_{t\rightarrow 0}\frac{f(tu)}{tu}=0 \ \text{ and }  \ \frac{f(tu)}{tu}<(tu)^2\quad\text{ a.e. }x\in\supp(u),$$
    and since
    $$\int_{\mathbb R^3}(tu)^2u^2dx=t^2\int_{\mathbb R^3}u^4dx\rightarrow0\quad\text{when}\quad t\rightarrow0,$$
    we deduce
%    by the Generalized Lebesgue Dominated Convergence Theorem
    $$\int_{\mathbb R^3}\frac{f(tu)}{t}udx = \int_{\mathbb R^{3}} \frac{f(t u )}{t u}u^{2}dx< \int_{\mathbb R^{3}} (tu)^{2} u^{2}dx \rightarrow0\quad\text{when}\quad t\rightarrow0.$$
    We infer that 
%    Thus we can choose $0<t_0<1$ sufficiently small such that
    \begin{equation}\label{eq:t0}
    \exists t_{0}\in (0,1) : \norm{u}^2>\int_{\mathbb R^3}\frac{f(t_0u)}{t_0}udx.
    \end{equation}
    Furthermore, for every $\tau\in[t_0,T_0]$,
    \color{black}
    we have
    \begin{eqnarray}\label{eq:prima}
            I'(t_0v^++\tau v^-)[t_0v^+]&=&t_0^2\int_{\mathbb R^3}\left[\abs{\nabla v^+}^2+(v^+)^2\right]dx+t_0^4\lambda\int_{\mathbb     R^3}\phi_{v^+}(v^+)^2dx \nonumber\\
            &&+ t_0^2\tau^2\lambda\int_{\mathbb R^3}\phi_{v^-}(v^+)^2dx-\int_{\mathbb R^3}f(t_0v^+)t_0v^+dx \nonumber\\
            &>&t_0^2\left[\norm{u}^2-\int_{\mathbb R^3}\frac{f(t_0u)}{t_0}udx\right] \nonumber\\
            &>&0.
    \end{eqnarray}
%    from which
%    $$I'(t_0v^++\tau v^-)[t_0v^+]>t_0^2\left[\norm{u}^2-\int_{\mathbb R^3}\frac{f(t_0u)}{t_0}udx\right]>0.$$
   Finally, for every $\theta\in[t_0,T_0]$,
   \begin{eqnarray}\label{eq:seconda}
            I'(\theta v^++t_0v^-)[t_0v^-]&=&t_0^2\int_{\mathbb R^3}\left[\abs{\nabla v^-}^2+(v^-)^2\right]dx+t_0^4\lambda\int_{\mathbb R^3}\phi_{v^-}(v^-)^2dx \nonumber\\
            && + t_0^2\theta^2\lambda\int_{\mathbb R^3}\phi_{v^+}(v^-)^2dx-\int_{\mathbb R^3}f(t_0v^-)t_0v^-dx \nonumber \\
   &>&t_0^2\int_{\mathbb R^3}\left[\abs{\nabla w}^2+w^2\right]dx-\int_{\mathbb R^3}f(t_0w)t_0wdx \nonumber \\
   &>&\frac{t_0^2}{T_0^3}\left[\norm{u}^2-\int_{\mathbb R^3}\frac{f(t_0u)}{t_0}udx\right] \nonumber \\
   &>&0.
   \end{eqnarray}
    Summarizing, there exists $t_0>0$, the one given in \eqref{eq:t0}, and $T_0>t_{0}$, the one given in \eqref{R}, such that, by \eqref{eq:prima} and \eqref{eq:seconda}
    $$I'(t_0v^++\tau v^-)[t_0v^+],\ I'(\theta v^++t_0v^-)[t_0v^-]>0,\quad \forall\theta,\tau\in[t_0,T_0],$$
    and, by \eqref{eq:terza} and \eqref{eq:quarta},
    $$I'(T_0v^++\tau v^-)[T_0v^+],\ I'(\theta v^++T_0v^-)[T_0v^-]<0,\quad \forall\theta,\tau\in[t_0,T_0].$$
    This means that, if we define $\Phi:[t_0,T_0]\times[t_0,T_0]\rightarrow\mathbb R^2$ by
    $$\Phi(\theta,\tau):=\left(I'(\theta v^++\tau v^-)[\theta v^+],\ I'(\theta v^++\tau v^-)[\tau v^-]\right),$$
    as a consequence of Miranda's Theorem, there is $(\alpha_0,\beta_0)\in [t_0,T_0]\times[t_0,T_0]$ that satisfies $\Phi(\alpha_0,\beta_0)=(0,0)$, i.e.
    $$I'(\alpha_0v^++\beta_0v^-)[\alpha_0v^+]=0,\quad I'(\alpha_0v^++\beta_0v^-)[\beta_0v^-]=0.$$
    Therefore $\alpha_0v^++\beta_0v^-\in\mathcal M$ %\subset\mathcal N$ 
    and we conclude the proof.

%------------------------------------------------------
%------------------------------------------------------

\end{document}